\theoremstyle{plain}
\newtheorem{theorem}{Theorem}[section]
\newtheorem{corollary}{Corollary}[section]
\newtheorem{remark}{Remark}[section]
\newtheorem{lemma}{Lemma}[section]
\newtheorem{definition}{Definition}[section]
\title[ Stability and area growth  of $\lambda$-hypersurfaces ]
{ Stability and area growth  of $\lambda$-hypersurfaces}
\author{Qing-Ming Cheng and  Guoxin Wei}
\address{Qing-Ming Cheng \\ Department of Applied Mathematics, Faculty of Sciences,
Fukuoka  University, 814-0180, Fukuoka,  Japan, cheng@fukuoka-u.ac.jp}
\address{Guoxin Wei \\  School of Mathematical Sciences, South China Normal University,
510631, Guangzhou,  China, weiguoxin@tsinghua.org.cn}
\begin{document}
\maketitle

\begin{abstract}
\noindent In this paper,  We define a $\mathcal{F}$-functional and study $\mathcal{F}$-stability of $\lambda$-hypersurfaces, which
extend a result of  Colding-Minicozzi \cite{[CM]}. Lower bound growth and upper bound growth of  area for
complete and non-compact $\lambda$-hypersurfaces are studied.

\end{abstract}

\footnotetext{ 2001 \textit{ Mathematics Subject Classification}: 53C44, 53C42.}

\footnotetext{{\it Key words and phrases}: the weighted volume-preserving mean curvature flow, $\lambda$-hypersurfaces,
$\mathcal{F}$-stability, weak stability, area growth.}

%\footnotetext{The first author was partially  supported by JSPS Grant-in-Aid for Scientific Research (B):  No.16H03937.
%The second author was partly supported by NSFC Grant No.11771154, Guangdong Province Universities and Colleges Pearl River Scholar Funded Scheme (2018), Guangdong Natural Science Foundation Grant No.2019A1515011451.}

\section {Introduction}

\noindent
Let $X: M\rightarrow \mathbb{R}^{n+1}$ be a smooth $n$-dimensional immersed hypersurface in the $(n+1)$-dimensional
Euclidean space $\mathbb{R}^{n+1}$.
A  family $X(\cdot, t)$ of smooth immersions:
$$
X(\cdot, t):M\to  \mathbb{R}^{n+1}
$$
with  $X(\cdot, 0)=X(\cdot)$ is called  a mean curvature flow
if  they satisfy
\begin{equation*}
\dfrac{\partial X(p,t)}{\partial t}=\mathbf{H}(p,t),
\end{equation*}
where $\mathbf{H}(t)=\mathbf{H}(p,t)$ denotes the mean curvature vector  of hypersurface  $M_t=X(M^n,t)$ at point $X(p,t)$.
Huisken \cite{[H1]} proved that the mean curvature flow $M_t$ remains smooth and convex
until it becomes extinct at a point in the finite time.   If we rescale the flow about the point, the rescaling
converges to the round sphere. An immersed hypersurface $X:M\to  \mathbb{R}^{n+1}$ is called {\it a self-shrinker}  if
\begin{equation*}
H+\langle X,N\rangle=0,
\end{equation*}
where $H$ and $N$ denote the mean curvature and the  unit normal vector of $X:M\to  \mathbb{R}^{n+1}$, respectively.
 $\langle \cdot , \cdot\rangle $ denotes the standard inner product in $\mathbb{R}^{n+1}$.
It is  known that self-shrinkers play an important role in the study of the mean curvature flow
because they describe all possible blow ups at a given singularity of the mean curvature flow.

\noindent
Colding and Minicozzi \cite{[CM]}
have introduced a notation of $\mathcal{F}$-functional
and computed the first and the second variation formulas
of the $\mathcal{F}$-functional. They have proved that  an immersed hypersurface $X:M\to  \mathbb{R}^{n+1}$ is
a self-shrinker if and only if it is a critical point of the $\mathcal{F}$-functional.  Furthermore,
they have given a complete classification of the $\mathcal{F}$-stable complete self-shrinkers
with polynomial area growth.
\noindent
In \cite{[CW3]}, we consider a new type of mean curvature flow:
\begin{equation}\label{10-18-1}
\dfrac{\partial X(t)}{\partial t}=-\alpha(t) N(t) +\mathbf{H}(t),
\end{equation}
with
$$
\alpha(t) =\dfrac{\int_MH(t)\langle N(t), N\rangle e^{-\frac{|X|^2}2}d\mu}{\int_M\langle N(t), N\rangle e^{-\frac{|X|^2}2}d\mu},
$$
where  $N$ is the unit normal vector of $X:M\to  \mathbb{R}^{n+1}$.
We define {\it a weighted volume} of $M_t$ by
$$
V(t)=\int_M\langle X(t), N\rangle e^{-\frac{|X|^2}{2}}d\mu.
$$
We can prove that the flow \eqref{10-18-1} preserves the weighted volume $V(t)$. Hence, we call the flow \eqref{10-18-1}
{\it a weighted  volume-preserving mean curvature flow}.

\noindent
From a view of variations,
self-shrinkers of mean curvature flow can be characterized as critical points of the weighted area functional. In \cite{[CW3]},
the authors give  a definition of weighted volume and study  the weighted area functional for variations preserving this volume.
Critical points for
the weighted area functional for variations preserving this volume  are called  $\lambda$-hypersurfaces  by the authors in \cite{[CW3]}.
Precisely,   an $n$-dimensional hypersurface  $X:M\to  \mathbb{R}^{n+1}$ in Euclidean space  $ \mathbb{R}^{n+1}$ is  called {\it a $\lambda$-hypersurface}  if
\begin{equation}\label{eq:19-3-13-1}
\langle X, N\rangle +H=\lambda,
\end{equation}
where $\lambda$ is a constant,  $H$ and $N$ denote the mean curvature and  unit normal vector of $X:M\to  \mathbb{R}^{n+1}$, respectively.

\begin{remark}
If $\lambda=0$, $\langle X, N\rangle +H=\lambda=0$, then $X:M\to  \mathbb{R}^{n+1}$ is a self-shrinkers. Hence,  the notation of $\lambda$-hypersurfaces is a natural generalization of the self-shrinkers of the mean curvature flow. The equation \eqref{eq:19-3-13-1} also arises in the Gaussian isoperimetric problem.
\end{remark}

\noindent
In this paper,  we define
$\mathcal F$-functional. The first and second variation formulas of $\mathcal F$-functional
are given.  Notation of $\mathcal F$-stability and $\mathcal F$-unstability  of $\lambda$-hypersurfaces are
introduced. We prove that
spheres $S^n(r)$ with $r\leq \sqrt n$ or $r>\sqrt {n+1}$ are  $\mathcal F$-stable and
spheres $S^n(r)$ with $\sqrt n<r\leq \sqrt {n+1}$ are  $\mathcal F$-unstable.
In section 4, we study the  weak stability of  the weighted area functional for the weighted
volume-preserving variations.   In sections 5 and 6, the area growth
of complete and non-compact $\lambda$-hypersurfaces are studied.

\noindent
We should remark that this paper is the second part of our paper arXiv:1403.3177, which
is divided into two parts. The first part has been published \cite{CW1}.

\vskip 5mm

\section{The first variation of $\mathcal{F}$-functional}

\noindent
In this section, we will give another  variational characterization of $\lambda$-hypersurfaces.

\noindent
The following lemmas can be found in  \cite{[CW3]}.

\begin{lemma}\label{lemma 0}
If  $X: M\rightarrow \mathbb{R}^{n+1}$ is  a $\lambda$-hypersurface, then we have
\begin{equation}\label{eq:2001}
\aligned
 \mathcal{L}\langle X,a\rangle &=\lambda\langle N,a\rangle -\langle X,a\rangle,
\endaligned
\end{equation}

\begin{equation}\label{eq:2002}
\aligned
 \mathcal{L}\langle N,a\rangle &=-S\langle N,a\rangle,
\endaligned
\end{equation}
\begin{equation}\label{eq:2003}
\aligned
 \frac{1}{2}\mathcal{L}(|X|^2)&=n-|X|^2+\lambda\langle X,N\rangle ,
\endaligned
\end{equation}
where $\mathcal{L}$ is an elliptic operator given by $\mathcal{L}f=\Delta f-\langle X,\nabla f\rangle$,
 $\Delta$ and $\nabla$ denote the Laplacian and the gradient operator of the $\lambda$-hypersurface, respectively,
$a\in \mathbb{R}^{n+1}$ is constant vector, $S$ is the squared norm of the second fundamental form.
\end{lemma}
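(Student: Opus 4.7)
The plan is to compute each identity by expanding the Euclidean derivatives of the test function using the Gauss and Weingarten formulas, then substituting the $\lambda$-hypersurface equation $H+\langle X,N\rangle=\lambda$. Throughout I work in a local orthonormal frame $e_1,\dots,e_n$ at a point where $\nabla_{e_i}e_j=0$, and use $\bar\nabla_{e_i}e_j=\nabla_{e_i}e_j+h_{ij}N$ and $\bar\nabla_{e_i}N=-\sum_j h_{ij}e_j$, so that $\Delta X=HN$.

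For \eqref{eq:2001}, since $a$ is parallel in $\mathbb R^{n+1}$, I would compute $\nabla\langle X,a\rangle=a^{T}$ and $\Delta\langle X,a\rangle=\sum_i\langle\bar\nabla_{e_i}e_i,a\rangle=H\langle N,a\rangle$. Writing $a^{T}=a-\langle N,a\rangle N$ gives $\langle X,\nabla\langle X,a\rangle\rangle=\langle X,a\rangle-\langle X,N\rangle\langle N,a\rangle$, so
\[
\mathcal L\langle X,a\rangle=H\langle N,a\rangle-\langle X,a\rangle+\langle X,N\rangle\langle N,a\rangle=\lambda\langle N,a\rangle-\langle X,a\rangle,
\]
using $H+\langle X,N\rangle=\lambda$.

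For \eqref{eq:2003}, I would use $\nabla(\tfrac12|X|^2)=X^{T}$ and compute componentwise
\[
\Delta\tfrac12|X|^2=\sum_i\bigl(1+h_{ii}\langle X,N\rangle\bigr)=n+H\langle X,N\rangle.
\]
Substituting $H=\lambda-\langle X,N\rangle$ and using $\langle X,X^{T}\rangle=|X|^2-\langle X,N\rangle^2$ yields \eqref{eq:2003}. Both \eqref{eq:2001} and \eqref{eq:2003} are essentially bookkeeping with the shape operator.

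The step I expect to carry the most weight is \eqref{eq:2002}, because $\Delta\langle N,a\rangle$ involves a derivative of $h_{ij}$. I would write $e_i\langle N,a\rangle=-\sum_j h_{ij}\langle e_j,a\rangle$ and differentiate again in normal coordinates to obtain
\[
\Delta\langle N,a\rangle=-\sum_j\Bigl(\sum_i\nabla_i h_{ij}\Bigr)\langle e_j,a\rangle-S\langle N,a\rangle.
\]
The Codazzi equation $\nabla_i h_{ij}=\nabla_j h_{ii}$ gives $\sum_i\nabla_i h_{ij}=\nabla_j H$. On the other hand, differentiating the $\lambda$-hypersurface equation yields $\nabla_j H=-\nabla_j\langle X,N\rangle=\sum_k h_{jk}\langle X,e_k\rangle$. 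Therefore
\[
\langle\nabla H,a^{T}\rangle=\sum_{j,k}h_{jk}\langle X,e_k\rangle\langle e_j,a\rangle=\langle X,\nabla\langle N,a\rangle\rangle,
\]
and these two contributions cancel exactly, leaving $\mathcal L\langle N,a\rangle=-S\langle N,a\rangle$. The delicate point is keeping the sign conventions consistent and noticing that the Codazzi-produced term matches the drift term $-\langle X,\nabla\langle N,a\rangle\rangle$ only after invoking the structural equation; this is where the identity really uses that $M$ is a $\lambda$-hypersurface rather than merely a hypersurface.
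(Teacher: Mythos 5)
Your computation is correct and is exactly the standard argument; the paper in fact offers no proof of this lemma, simply citing \cite{[CW3]}, and what you wrote is what that proof amounts to. One small sign slip in \eqref{eq:2002}: since $\langle X,\nabla\langle N,a\rangle\rangle=-\sum_{i,j}h_{ij}\langle X,e_i\rangle\langle e_j,a\rangle$, the correct identity is $\langle\nabla H,a^{T}\rangle=\sum_{j,k}h_{jk}\langle X,e_k\rangle\langle e_j,a\rangle=-\langle X,\nabla\langle N,a\rangle\rangle$ (not $+$); with that sign the Codazzi term $-\langle\nabla H,a^{T}\rangle$ appearing in $\Delta\langle N,a\rangle$ equals $+\langle X,\nabla\langle N,a\rangle\rangle$ and cancels the drift term $-\langle X,\nabla\langle N,a\rangle\rangle$ exactly, which is the cancellation you assert.
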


\begin{lemma}\label{lemma 1}
If $X: M\rightarrow \mathbb{R}^{n+1}$ is a hypersurface, $u$ is a $C^1$-function with compact support and
$v$ is a $C^2$-function, then
\begin{equation}
\int_M u(\mathcal{L}v)e^{-\frac{|X|^2}{2}}d\mu=-\int_M\langle \nabla u,\nabla v\rangle e^{-\frac{|X|^2}{2}}d\mu.
\end{equation}
\end{lemma}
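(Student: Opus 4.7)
The plan is to recognize $\mathcal{L}$ as the weighted (drift) Laplacian associated with the Gaussian density $e^{-|X|^2/2}$, so that the identity becomes a standard divergence-theorem integration-by-parts once written in divergence form. Concretely, I would first establish the pointwise identity
\begin{equation*}
e^{-\frac{|X|^2}{2}}\,\mathcal{L}v = \mathrm{div}\bigl(e^{-\frac{|X|^2}{2}}\nabla v\bigr),
\end{equation*}
where $\mathrm{div}$ and $\nabla$ are the intrinsic divergence and gradient on $M$.

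To verify this pointwise identity, I would compute $\nabla\bigl(e^{-|X|^2/2}\bigr)$ on $M$. For any tangent vector $e_i$ on $M$, viewed as a vector in $\mathbb{R}^{n+1}$ via the immersion, one has $e_i(|X|^2)=2\langle X,e_i\rangle$, hence $\nabla(\tfrac{1}{2}|X|^2)=X^{T}$, the tangential component of $X$. Since $\nabla v$ is tangent to $M$, $\langle X^{T},\nabla v\rangle=\langle X,\nabla v\rangle$, so
\begin{equation*}
\mathrm{div}\bigl(e^{-\frac{|X|^2}{2}}\nabla v\bigr)
= e^{-\frac{|X|^2}{2}}\Delta v - e^{-\frac{|X|^2}{2}}\langle X,\nabla v\rangle
= e^{-\frac{|X|^2}{2}}\mathcal{L}v,
\end{equation*}
which proves the claim.

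Next, I would multiply by $u$ and apply the divergence theorem on $M$. Using the product rule
\begin{equation*}
\mathrm{div}\bigl(u\,e^{-\frac{|X|^2}{2}}\nabla v\bigr)
= \langle\nabla u,\nabla v\rangle e^{-\frac{|X|^2}{2}} + u\,\mathrm{div}\bigl(e^{-\frac{|X|^2}{2}}\nabla v\bigr),
\end{equation*}
and integrating, the compact support of $u$ makes the boundary term vanish (and, if $M$ has boundary, one restricts to the interior of the support). This yields
\begin{equation*}
\int_M u(\mathcal{L}v)e^{-\frac{|X|^2}{2}}d\mu = -\int_M\langle\nabla u,\nabla v\rangle e^{-\frac{|X|^2}{2}}d\mu,
\end{equation*}
as required.

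There is no real obstacle here; the only mild subtlety is being careful that $X^{T}$ is used in place of $X$ when pairing with tangential gradients, and that the compact support assumption on $u$ is what kills the divergence-theorem boundary contribution on a possibly non-compact $M$. The rest is routine.
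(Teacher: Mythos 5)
Your proof is correct and is exactly the standard argument: writing $e^{-|X|^2/2}\mathcal{L}v=\mathrm{div}\bigl(e^{-|X|^2/2}\nabla v\bigr)$ via $\nabla(\tfrac12|X|^2)=X^{T}$ and integrating by parts against the compactly supported $u$. The paper itself only cites \cite{[CW3]} for this lemma, and the proof there (as in Colding--Minicozzi) is the same weighted integration by parts, so there is nothing to add.
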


\begin{corollary}\label{corollary 1}
Let $X: M\rightarrow \mathbb{R}^{n+1}$ be a complete hypersurface. If $u$, $v$ are $C^2$ functions satisfying
\begin{equation}
\int_M(|u\nabla v|+|\nabla u||\nabla v|+|u\mathcal{L}v|)e^{-\frac{|X|^2}{2}}d\mu< +\infty,
\end{equation}
then we have
\begin{equation}
\int_M u(\mathcal{L}v)e^{-\frac{|X|^2}{2}}d\mu=-\int_M\langle \nabla u,\nabla v\rangle e^{-\frac{|X|^2}{2}}d\mu.
\end{equation}
\end{corollary}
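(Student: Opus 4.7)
The plan is to reduce to Lemma \ref{lemma 1} by a standard cutoff exhaustion, since that lemma already gives the identity when one factor has compact support. Because $M$ is complete, fix a basepoint $p_0\in M$ and, for each integer $j\ge 1$, choose a Lipschitz (or smooth, after mollification) cutoff $\phi_j: M\to[0,1]$ such that $\phi_j\equiv 1$ on the intrinsic ball $B_j(p_0)$, $\phi_j\equiv 0$ outside $B_{2j}(p_0)$, and $|\nabla\phi_j|\le 1/j$. Then $u\phi_j$ is a compactly supported $C^1$ function (a trivial smoothing step lets us invoke Lemma \ref{lemma 1}), so
\begin{equation*}
\int_M u\phi_j\,(\mathcal{L}v)\,e^{-\frac{|X|^2}{2}}d\mu
=-\int_M \phi_j\langle\nabla u,\nabla v\rangle\,e^{-\frac{|X|^2}{2}}d\mu
-\int_M u\langle\nabla \phi_j,\nabla v\rangle\,e^{-\frac{|X|^2}{2}}d\mu.
\end{equation*}

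Next I would pass to the limit $j\to\infty$ in each of the three integrals. Since $0\le\phi_j\le 1$ and $\phi_j\to 1$ pointwise, the integrands on the left-hand side and in the first term on the right-hand side are dominated, respectively, by $|u\mathcal{L}v|e^{-|X|^2/2}$ and $|\nabla u||\nabla v|e^{-|X|^2/2}$; both are integrable by hypothesis, so Lebesgue's dominated convergence theorem yields the two desired limits $\int_M u(\mathcal{L}v)e^{-|X|^2/2}d\mu$ and $-\int_M\langle\nabla u,\nabla v\rangle e^{-|X|^2/2}d\mu$.

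The only term left to handle, and the one to which the hypothesis $\int_M |u\nabla v|e^{-|X|^2/2}d\mu<\infty$ is really tailored, is the cutoff error term. Using $|\nabla \phi_j|\le 1/j$, one estimates
\begin{equation*}
\left|\int_M u\langle\nabla\phi_j,\nabla v\rangle\,e^{-\frac{|X|^2}{2}}d\mu\right|
\le \frac{1}{j}\int_M |u\nabla v|\,e^{-\frac{|X|^2}{2}}d\mu,
\end{equation*}
which tends to $0$ as $j\to\infty$ by the integrability hypothesis. Combining the three limits gives the stated identity.

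The main (and really the only) obstacle is producing the cutoffs $\phi_j$ with the stated gradient bound on a possibly non-smooth distance function; this is standard on any complete Riemannian manifold, e.g.\ by composing the intrinsic distance from $p_0$ with a fixed piecewise-linear profile and smoothing slightly, the Lipschitz gradient bound $1/j$ being preserved. Once this is in place, the argument is a mechanical dominated-convergence plus a vanishing-error estimate.
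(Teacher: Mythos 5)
Your proof is correct and is exactly the standard argument: the paper itself defers the proof to \cite{[CW3]}, where (following Colding--Minicozzi) the identity is obtained by applying Lemma \ref{lemma 1} to $u\phi_j$ for a cutoff exhaustion and passing to the limit via dominated convergence, with the hypothesis $\int_M|u\nabla v|e^{-|X|^2/2}d\mu<\infty$ used precisely to kill the term containing $\nabla\phi_j$. The only point worth being careful about, which you already flag, is that the Lipschitz cutoffs built from the intrinsic distance have compact support by Hopf--Rinow (completeness is used here) and can be smoothed without losing the $O(1/j)$ gradient bound.
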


\begin{lemma}\label{lemma 10}
Let $X: M\rightarrow \mathbb{R}^{n+1}$ be an $n$-dimensional complete $\lambda$-hypersurface with polynomial area growth, then
\begin{equation}\label{eq:1001}
 \int_M (\langle X,a\rangle -\lambda\langle N,a\rangle )e^{-\frac{|X|^2}{2}}d\mu=0,
\end{equation}

\begin{equation}\label{eq:1003}
 \int_M \bigl(n-|X|^2+\lambda\langle X,N\rangle \bigl)e^{-\frac{|X|^2}{2}}d\mu=0,
\end{equation}

 \begin{equation}\label{eq:1010}
\aligned
&\ \ \ \int_M \langle X, a\rangle |X|^2e^{-\frac{|X|^2}{2}}d\mu\\
&=\int_M \biggl(2n\lambda\langle N, a\rangle +2\lambda\langle X,a\rangle (\lambda-H) -\lambda\langle N,a\rangle |X|^2\biggl)e^{-\frac{|X|^2}{2}}d\mu,
\endaligned
\end{equation}

 \begin{equation}\label{eq:1011}
\int_M\langle X,a\rangle ^2e^{-\frac{|X|^2}{2}}d\mu=\int_M\biggl(|a^{T}|^2+\lambda\langle N,a\rangle \langle X,a\rangle \biggl)e^{-\frac{|X|^2}{2}}d\mu,
\end{equation}
where $a^{T}=\sum_i<a,e_i>e_i$.

\begin{equation}\label{eq:1012}
\aligned
&\int_M\biggl(|X|^2-n-\frac{\lambda(\lambda-H)}{2}\biggl)^2e^{-\frac{|X|^2}{2}}d\mu\\
&=\int_M\biggl\{(\frac{\lambda^2}{4}-1)(\lambda-H)^2 +2n-H^2+\lambda^2\biggl\}e^{-\frac{|X|^2}{2}}d\mu.
\endaligned
\end{equation}

\end{lemma}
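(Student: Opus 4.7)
Each identity will follow from Lemma~\ref{lemma 0} combined with the weighted self-adjointness of $\mathcal{L}$ provided by Corollary~\ref{corollary 1}. Polynomial area growth together with the $\lambda$-hypersurface equation $H=\lambda-\langle X,N\rangle$ forces every integrand in play ($|X|^{2k}$, $\langle X,a\rangle$, $\langle N,a\rangle$, $H$, and their gradients) to be at most polynomial in $|X|$, so the Gaussian weight automatically validates the integrability hypotheses of Corollary~\ref{corollary 1} throughout; I will not belabor this point.

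For \eqref{eq:1001} and \eqref{eq:1003} I would apply Corollary~\ref{corollary 1} with $u\equiv 1$ to the identities \eqref{eq:2001} and \eqref{eq:2003}. Since $\nabla 1=0$, the right-hand side in Corollary~\ref{corollary 1} vanishes, leaving $\int_M \mathcal{L}\langle X,a\rangle\,e^{-|X|^2/2}d\mu=0$ and $\int_M \tfrac{1}{2}\mathcal{L}|X|^2\,e^{-|X|^2/2}d\mu=0$, which are exactly \eqref{eq:1001} and \eqref{eq:1003}.

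For \eqref{eq:1010} I would pair $u=\langle X,a\rangle$ with $v=|X|^2$ in Corollary~\ref{corollary 1} in both directions to obtain the symmetry
\begin{equation*}
\int_M\langle X,a\rangle\,\mathcal{L}|X|^2\,e^{-|X|^2/2}d\mu=\int_M|X|^2\,\mathcal{L}\langle X,a\rangle\,e^{-|X|^2/2}d\mu.
\end{equation*}
Substituting \eqref{eq:2001} on the right and \eqref{eq:2003} on the left, replacing $\langle X,N\rangle$ by $\lambda-H$, and using \eqref{eq:1001} to rewrite $2n\int_M\langle X,a\rangle e^{-|X|^2/2}d\mu$ as $2n\lambda\int_M\langle N,a\rangle e^{-|X|^2/2}d\mu$ produces \eqref{eq:1010} after a short rearrangement. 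For \eqref{eq:1011} I would take $u=v=\langle X,a\rangle$; since $\nabla\langle X,a\rangle=a^T$ on $M$ one has $|\nabla\langle X,a\rangle|^2=|a^T|^2$, and Corollary~\ref{corollary 1} combined with \eqref{eq:2001} yields \eqref{eq:1011} after a single rearrangement.

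Identity \eqref{eq:1012} is the only one that demands genuine bookkeeping. The plan is to first expand the square
\begin{equation*}
\Bigl(|X|^2-n-\tfrac{\lambda(\lambda-H)}{2}\Bigr)^2=(|X|^2-n)^2-\lambda(|X|^2-n)(\lambda-H)+\tfrac{\lambda^2}{4}(\lambda-H)^2,
\end{equation*}
and then produce an independent formula for $\int_M(|X|^2-n)^2 e^{-|X|^2/2}d\mu$ via the integration by parts
\begin{equation*}
\int_M|X|^2\,\mathcal{L}|X|^2\,e^{-|X|^2/2}d\mu=-\int_M|\nabla|X|^2|^2 e^{-|X|^2/2}d\mu=-4\int_M|X^T|^2 e^{-|X|^2/2}d\mu,
\end{equation*}
where $|X^T|^2=|X|^2-(\lambda-H)^2$. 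Substituting \eqref{eq:2003} into the first expression and combining the two displays makes the two $\lambda(|X|^2-n)(\lambda-H)$ cross terms cancel. A final application of \eqref{eq:1003} together with the algebraic identity $(\lambda-H)^2+H(\lambda-H)=\lambda(\lambda-H)$ converts the residual $2\int_M|X^T|^2 e^{-|X|^2/2}d\mu$ into $\int_M(2n+2H(\lambda-H))e^{-|X|^2/2}d\mu$, which matches the right-hand side of \eqref{eq:1012} after regrouping. The hard part here is purely algebraic: tracking the $(|X|^2-n)(\lambda-H)$, $(\lambda-H)^2$, and $H^2$ coefficients consistently through two levels of substitution, since the conceptual tools (weighted self-adjointness and the $\lambda$-hypersurface equation) are all already in hand.
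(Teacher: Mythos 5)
Your proposal is correct; I checked each of the five identities and the integration-by-parts pairings, the use of \eqref{eq:2001} and \eqref{eq:2003}, and the algebra for \eqref{eq:1012} all work out (in particular $2\int_M|X^T|^2e^{-|X|^2/2}d\mu=\int_M\bigl(2n+2H(\lambda-H)\bigr)e^{-|X|^2/2}d\mu$ does reproduce the $-(\lambda-H)^2-H^2+\lambda^2$ grouping on the right-hand side). The present paper gives no proof of this lemma, deferring to \cite{[CW3]}, but your argument via the weighted self-adjointness of $\mathcal{L}$ from Corollary~\ref{corollary 1} applied to the identities of Lemma~\ref{lemma 0} is exactly the standard route taken there, including the justification of integrability from polynomial area growth.
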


\noindent
Let $X(s): M\rightarrow \mathbb{R}^{n+1}$ a variation of $X$  with $X(0)=X$ and $\frac{\partial}{\partial s}X(s)|_{s=0}=fN$.
For $X_0\in \mathbb{R}^{n+1}$ and a real number $t_0 $,  {\it  $\mathcal{F}$-functional}  is defined by
\begin{equation*}
\aligned
&\ \ \ \ \mathcal{F}_{X_s,t_s}(s)=\mathcal{F}_{X_s,t_s}(X(s))\\
&=(4\pi t_s)^{-\frac{n}{2}}\int_M e^{-\frac{|X(s)-X_s|^2}{2t_s}}d\mu_s
  +\lambda (4\pi t_0)^{-\frac{n}{2}}(\frac{t_0}{t_s})^{\frac{1}{2}}\int_M \langle X(s)-X_s,N\rangle e^{-\frac{|X-X_0|^2}{2t_0}}d\mu,
\endaligned
\end{equation*}
where $X_s$ and $t_s$ denote variations of $X_0$ and $t_0$.
Let
\begin{equation*}
\frac{\partial t_s}{\partial s}=h(s),\ \ \frac{\partial X_s}{\partial s}=y(s),\ \
\frac{\partial X(s)}{\partial s}=f(s)N(s),
\end{equation*}
one calls that  $X: M\rightarrow \mathbb{R}^{n+1}$ is {\it a critical point of } $\mathcal{F}_{X_s,t_s}(s)$
if it is critical with respect to all normal variations and all variations in $X_0$ and $t_0$.

\begin{lemma}\label{lemma 3}
Let $X(s)$ be a variation of $X$ with normal variation vector field $\frac{\partial X(s)}{\partial s}|_{s=0}=fN$. If $X_s$ and $t_s$ are variations of $X_0$ and $t_0$ with $\frac{\partial X_s}{\partial s}|_{s=0}=y$ and $\frac{\partial t_s}{\partial s}|_{s=0}=h$, then the first variation formula of $\mathcal{F}_{X_s,t_s}(s)$ is given by
\begin{equation}\label{eq:1000}
\aligned
&\mathcal{F}^{'}_{X_0,t_0}(0)\\ &=(4\pi t_0)^{-\frac{n}{2}}\int_M \biggl(\lambda-(H+\langle \frac{X-X_0}{t_0},N\rangle )\biggl)f e^{-\frac{|X-X_0|^2}{2}}d\mu\\
&\ \ +(4\pi t_0)^{-\frac{n}{2}}\int_M \biggl(\langle \frac{X-X_0}{t_0},y\rangle -\lambda \langle N,y\rangle \biggl)e^{-\frac{|X-X_0|^2}{2}}d\mu\\
&\ \ +(4\pi t_0)^{-\frac{n}{2}}\int_M \biggl(\frac{|X-X_0|^2}{t_0}-n-\lambda\langle X-X_0,N\rangle \biggl)\frac{h}{2t_0}e^{-\frac{|X-X_0|^2}{2}}d\mu.
\endaligned
\end{equation}
\end{lemma}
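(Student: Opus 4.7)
The plan is to write $\mathcal{F}_{X_s,t_s}(s) = A(s) + \lambda B(s)$, where $A$ is the Huisken/Colding--Minicozzi Gaussian area piece and $B$ is the new weighted-volume piece contributed by the $\lambda$-term, then differentiate each at $s=0$ by the product rule and regroup by the three variation parameters $f$, $y$, $h$.

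For $A(s) = (4\pi t_s)^{-n/2}\int_M e^{-|X(s)-X_s|^2/(2t_s)}\,d\mu_s$, the first variation is the one computed in \cite{[CM]}; to rederive it one differentiates the three $s$-dependent factors using $\partial_s X(s)|_0 = fN$, $\partial_s X_s|_0 = y$, and $\partial_s t_s|_0 = h$. The normalization $(4\pi t_s)^{-n/2}$ contributes $-nh/(2t_0)$; the exponent $-|X(s)-X_s|^2/(2t_s)$ contributes $-\langle X-X_0, fN-y\rangle/t_0 + |X-X_0|^2 h/(2t_0^2)$; and the area element $d\mu_s$ contributes $-Hf\,d\mu$ via the standard normal-variation formula. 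For $B(s) = (4\pi t_0)^{-n/2}(t_0/t_s)^{1/2}\int_M \langle X(s)-X_s,N\rangle\,e^{-|X-X_0|^2/(2t_0)}\,d\mu$, only the prefactor $(t_0/t_s)^{1/2}$ and the inner product $\langle X(s)-X_s,N\rangle$ depend on $s$; the former differentiates to $-h/(2t_0)$ and the latter to $f - \langle y, N\rangle$, producing two explicit Gaussian integrals.

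Summing $A'(0) + \lambda B'(0)$ and collecting coefficients of $f$, $y$, and $h$ yields precisely the three integrands in \eqref{eq:1000}: $\lambda - H - \langle (X-X_0)/t_0, N\rangle$ paired with $f$, $\langle (X-X_0)/t_0, y\rangle - \lambda\langle N, y\rangle$ independent of $f$, and $(|X-X_0|^2/t_0 - n - \lambda\langle X-X_0, N\rangle)\cdot h/(2t_0)$. The only genuinely delicate point is in $A'(0)$, where $t_s$ appears simultaneously in the normalization and inside the exponent, so the two resulting contributions to the coefficient of $h$ must combine exactly to give the Huisken-type expression $|X-X_0|^2/t_0 - n$; once that bookkeeping is handled, the $B'(0)$ piece is a routine derivative of a linear functional in $X(s)$ against a fixed Gaussian weight.
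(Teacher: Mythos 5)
Your proposal is correct and follows essentially the same route as the paper: the authors likewise split $\mathcal{F}$ into the Gaussian area functional $\mathbb{A}(s)$ and the weighted-volume term $\mathbb{V}(s)$, differentiate the prefactors, the exponent, and $d\mu_s$ exactly as you describe, and then set $s=0$ and regroup by $f$, $y$, $h$. The bookkeeping you flag (the two $h$-contributions from $(4\pi t_s)^{-n/2}$ and from the exponent combining into $|X-X_0|^2/t_0-n$) is handled identically in the paper.
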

\vskip 3pt\noindent {\it Proof}. Defining
\begin{equation}
\mathbb{A}(s)=\int_M e^{-\frac{|X(s)-X_s|^2}{2t_s}}d\mu_s, \ \
\mathbb{V}(s)=\int_M \langle X(s)-X_s,N\rangle e^{-\frac{|X-X_0|^2}{2t_0}}d\mu,
\end{equation}
then
\begin{equation*}
\aligned
\mathcal{F}^{'}_{X_s,t_s}(s)&=(4\pi t_s)^{-\frac{n}{2}}\mathbb{A}^{'}(s)+\lambda (4\pi t_0)^{-\frac{n}{2}}(\frac{t_0}{t_s})^{\frac{1}{2}}\mathbb{V}^{'}(s)\\
&\ \ -(4\pi t_s)^{-\frac{n}{2}}\frac{n}{2t_s}h \mathbb{A}(s)-\lambda (4\pi t_0)^{-\frac{n}{2}}(\frac{t_0}{t_s})^{\frac{1}{2}}
   \frac{h}{2t_s}\mathbb{V}(s).
\endaligned
\end{equation*}
Since
\begin{equation*}
\aligned
\mathbb{A}^{'}(s)&=\int_M \biggl\{-\langle \frac{X(s)-X_s}{t_s}, \frac{\partial X(s)}{\partial s}-\frac{\partial X_s}{\partial s}\rangle  +\frac{|X(s)-X_s|^2}{2t_s^2}h\\
&\ \ \ \ \ \ \ \ \ \ -H_s\langle \frac{\partial X(s)}{\partial s}, N(s)\rangle \biggl\}
    e^{-\frac{|X(s)-X_s|^2}{2t_s}}d\mu_s,
\endaligned
\end{equation*}

\begin{equation*}
\mathbb{V}^{'}(s)=\int_M\langle \frac{\partial X(s)}{\partial s}-\frac{\partial X_s}{\partial s}, N\rangle e^{-\frac{|X-X_0|^2}{2t_0}}d\mu,
\end{equation*}
we have
\begin{equation}\label{10-18-2}
\aligned
&\ \ \ \ \mathcal{F}^{'}_{X_s,t_s}(s)\\
&=(4\pi t_s)^{-\frac{n}{2}}\int_M -(H_s+\langle \frac{X(s)-X_s}{t_s}, N(s)\rangle )fe^{-\frac{|X(s)-X_s|^2}{2t_s}}d\mu_s\\
&\ \ \ +(4\pi t_0)^{-\frac{n}{2}}\sqrt{\frac{t_0}{t_s}}\int_M \lambda f\langle N(s),N\rangle e^{-\frac{|X-X_0|^2}{2t_0}}d\mu\\
&\ \ \ +(4\pi t_s)^{-\frac{n}{2}}\int_M \langle \frac{X(s)-X_s}{t_s},y\rangle e^{-\frac{|X(s)-X_s|^2}{2t_s}}d\mu_s\\
&\ \ \ +(4\pi t_0)^{-\frac{n}{2}}\sqrt{\frac{t_0}{t_s}}\int_M \lambda \langle -y,N\rangle e^{-\frac{|X-X_0|^2}{2t_0}}d\mu\\
&\ \ \ +(4\pi t_s)^{-\frac{n}{2}}\int_M (-\frac{n}{2t_s}+\frac{|X(s)-X_s|^2}{2t_s^2})he^{-\frac{|X(s)-X_s|^2}{2t_s}}d\mu_s\\
&\ \ \ +(4\pi t_0)^{-\frac{n}{2}}\sqrt{\frac{t_0}{t_s}}\int_M -\frac{h\lambda}{2t_s}\langle X(s)-X_s,N\rangle e^{-\frac{|X-X_0|^2}{2t_0}}d\mu.
\endaligned
\end{equation}
If $s=0$, then $X(0)=X$, $X_s=X_0$, $t_s=t_0$ and
\begin{equation*}
\aligned
&\ \ \ \ \mathcal{F}^{'}_{X_0,t_0}(0)\\
&=(4\pi t_0)^{-\frac{n}{2}}\int_M \biggl(\lambda-(H+\langle \frac{X-X_0}{t_0},N\rangle )\biggl)f e^{-\frac{|X-X_0|^2}{2}}d\mu\\
&\ \ \ +(4\pi t_0)^{-\frac{n}{2}}\int_M \biggl(\langle \frac{X-X_0}{t_0},y\rangle -\lambda \langle N,y\rangle \biggl)e^{-\frac{|X-X_0|^2}{2}}d\mu\\
&\ \ \ +(4\pi t_0)^{-\frac{n}{2}}\int_M \biggl(\frac{|X-X_0|^2}{t_0}-n-\lambda\langle X-X_0,N\rangle \biggl)\frac{h}{2t_0}e^{-\frac{|X-X_0|^2}{2}}d\mu.
\endaligned
\end{equation*}
$$\eqno{\Box}$$

\noindent
From Lemma \ref{lemma 3}, we know that if $X: M\rightarrow \mathbb{R}^{n+1}$ is a critical point of $\mathcal{F}$-functional
$\mathcal{F}_{X_s,t_s}(s)$, then
\begin{equation*}
H+\langle \frac{X-X_0}{t_0},N\rangle =\lambda.
\end{equation*}
We next prove that if $H+\langle \frac{X-X_0}{t_0},N\rangle =\lambda$, then $X: M\rightarrow \mathbb{R}^{n+1}$ must be a critical point of $\mathcal{F}$-functional $\mathcal{F}_{X_s,t_s}(s)$. For simplicity, we only consider the case of $X_0=0$ and $t_0=1$. In this case, $H+\langle \frac{X-X_0}{t_0},N\rangle =\lambda$ becomes
\begin{equation}
H+\langle X,N\rangle =\lambda.
\end{equation}

\noindent
Furthermore, we  know  that $X: M\rightarrow \mathbb{R}^{n+1}$ is a critical point of the $\mathcal{F}$-functional
$\mathcal{F}_{X_s,t_s}(s)$ if and only if $X: M\rightarrow \mathbb{R}^{n+1}$  is a  critical point of
$\mathcal{F}$-functional $\mathcal{F}_{X_0,t_0}(s)$ with respect to fixed $X_0$ and $t_0$.

\begin{theorem}\label{theorem 4}
$X: M\rightarrow \mathbb{R}^{n+1}$ is a critical point of $\mathcal{F}_{X_s,t_s}(s)$ if and only if
$$
H+\langle \frac{X-X_0}{t_0},N\rangle =\lambda.
$$
\end{theorem}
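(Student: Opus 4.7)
The plan is to reduce the biconditional to the three integral identities one reads off the first-variation formula \eqref{eq:1000}, and then to invoke Lemma \ref{lemma 10} to handle the backward direction.

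For the forward implication, I would argue by specialization. If $X$ is a critical point of $\mathcal{F}_{X_s,t_s}(s)$, then in particular $\mathcal{F}'_{X_0,t_0}(0)=0$ for every normal variation with $y=0$ and $h=0$. By \eqref{eq:1000}, this forces
\[
\int_M \Bigl(\lambda - H - \bigl\langle \tfrac{X-X_0}{t_0}, N\bigr\rangle\Bigr) f\, e^{-\frac{|X-X_0|^2}{2}}\, d\mu = 0
\]
for every compactly supported $f$. By the fundamental lemma of the calculus of variations, the integrand factor vanishes pointwise, giving $H + \langle (X-X_0)/t_0, N\rangle = \lambda$.

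For the backward implication, I would use the reduction to $X_0=0$, $t_0=1$ indicated by the authors, so the equation becomes $H + \langle X, N\rangle = \lambda$; i.e., $X$ is a $\lambda$-hypersurface in the sense of \eqref{eq:19-3-13-1}. The first line in \eqref{eq:1000} then vanishes identically in $f$ because the integrand factor is identically zero. It remains to show that the two remaining lines vanish for arbitrary constant $y \in \mathbb{R}^{n+1}$ and arbitrary $h \in \mathbb{R}$. For the $y$-term, I apply \eqref{eq:1001} from Lemma \ref{lemma 10} with $a=y$: this gives
\[
\int_M \bigl(\langle X, y\rangle - \lambda \langle N, y\rangle\bigr) e^{-\frac{|X|^2}{2}}\, d\mu = 0.
\]
For the $h$-term (with $X_0=0$, $t_0=1$), I apply \eqref{eq:1003}, which says exactly that
\[
\int_M \bigl(|X|^2 - n - \lambda\langle X, N\rangle\bigr) e^{-\frac{|X|^2}{2}}\, d\mu = 0.
\]
Hence $\mathcal{F}'_{0,1}(0)=0$ for every choice of $f$, $y$, $h$, and $X$ is a critical point.

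The main obstacle I anticipate is not the computation itself but the reduction from general $(X_0,t_0)$ to the normalized case $(0,1)$: one must justify that the statement "critical point of $\mathcal{F}_{X_s,t_s}$" is invariant under translating and rescaling the base point, which the excerpt asserts just before the theorem. Once that invariance is accepted, the $(0,1)$-case handles everything, and the proof is essentially an application of Lemma \ref{lemma 10}. A minor subtlety is that Lemma \ref{lemma 10} is stated under polynomial area growth, so strictly speaking the theorem should either inherit that hypothesis or else invoke Corollary \ref{corollary 1} with an appropriate integrability check for the vectors $y$ and constant $h$ used in the variations; in either case the vanishing of both remaining integrals is immediate.
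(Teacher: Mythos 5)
Your proposal is correct and follows essentially the same route as the paper: reduce to $X_0=0$, $t_0=1$, read off the $\lambda$-hypersurface equation from the $f$-term of \eqref{eq:1000} via the fundamental lemma, and kill the $y$- and $h$-terms in the converse direction using \eqref{eq:1001} and \eqref{eq:1003} of Lemma \ref{lemma 10}. Your remark that Lemma \ref{lemma 10} is stated under a polynomial area growth hypothesis which the theorem does not explicitly carry is a fair observation about an implicit assumption the paper also makes, but it does not change the argument.
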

\begin{proof}
We only prove the result for $X_0=0$ and $t_0=1$. In this case, the first variation formula \eqref{eq:1000} becomes
\begin{equation}\label{eq:1004}
\aligned
\mathcal{F}^{'}_{0,1}(0)&=(4\pi)^{-\frac{n}{2}}\int_M \biggl(\lambda-(H+\langle X, N\rangle )\biggl)f e^{-\frac{|X|^2}{2}}d\mu\\
&\ \ +(4\pi)^{-\frac{n}{2}}\int_M \biggl(\langle X,y\rangle -\lambda \langle N,y\rangle \biggl)e^{-\frac{|X|^2}{2}}d\mu\\
&\ \ +(4\pi)^{-\frac{n}{2}}\int_M \biggl(|X|^2-n-\lambda\langle X,N\rangle \biggl)\frac{h}{2}e^{-\frac{|X|^2}{2}}d\mu.
\endaligned
\end{equation}
If $X: M\rightarrow \mathbb{R}^{n+1}$ is a critical point of $\mathcal{F}_{0,1}$, then $X: M\rightarrow \mathbb{R}^{n+1}$
should satisfy $H+\langle X, N\rangle =\lambda$. Conversely, if  $H+\langle X, N\rangle =\lambda$ is satisfied,
then we know that $X: M\rightarrow \mathbb{R}^{n+1}$ is a $\lambda$-hypersurface. Therefore,
the last two terms in \eqref{eq:1004} vanish for any $h$ and any $y$ from \eqref{eq:1001}
and \eqref{eq:1003} of Lemma \ref{lemma 10}. Therefore $X: M\rightarrow \mathbb{R}^{n+1}$ is a critical point of $\mathcal{F}_{0,1}$.
\end{proof}

\begin{corollary}\label{corollary 5.1}
$X: M\rightarrow \mathbb{R}^{n+1}$ is a critical point of $\mathcal{F}_{X_s,t_s}(s)$ if and only if
$M$ is the critical point of $\mathcal{F}$-functional with respect to fixed $X_0$ and $t_0$.
\end{corollary}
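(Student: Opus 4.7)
The plan is to extract this corollary as a direct consequence of Theorem \ref{theorem 4} together with Lemma \ref{lemma 10}, using the structure of the first variation formula \eqref{eq:1000}. One direction is immediate: every variation with $X_s\equiv X_0$ and $t_s\equiv t_0$ (so that $y\equiv 0$ and $h\equiv 0$) is admissible in the larger variational problem, so if $X$ is critical for $\mathcal{F}_{X_s,t_s}$ then it is in particular critical for $\mathcal{F}_{X_0,t_0}$ with fixed basepoint and scale.

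For the converse, I would reduce to the case $X_0=0$, $t_0=1$ exactly as in Theorem \ref{theorem 4} (the general case is identical up to translation and rescaling of the Gaussian weight). Suppose $X$ is critical for $\mathcal{F}_{0,1}$ under normal variations only; then setting $y=0$ and $h=0$ in \eqref{eq:1004} leaves
\begin{equation*}
(4\pi)^{-\frac{n}{2}}\int_M \bigl(\lambda-(H+\langle X,N\rangle)\bigr) f\, e^{-\frac{|X|^2}{2}}d\mu = 0
\end{equation*}
for every compactly supported $f$, which forces $H+\langle X,N\rangle = \lambda$. Hence $X$ is a $\lambda$-hypersurface.

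The remaining observation is that for any $\lambda$-hypersurface the second and third integrals in \eqref{eq:1004} vanish for every $y\in\mathbb{R}^{n+1}$ and every $h\in\mathbb{R}$, by the integral identities \eqref{eq:1001} and \eqref{eq:1003} in Lemma \ref{lemma 10} applied to the constant vectors $a=y$ and to the scalar $h/2$ respectively. Consequently $\mathcal{F}'_{0,1}(0)=0$ for all admissible triples $(f,y,h)$, so $X$ is a critical point of the full functional $\mathcal{F}_{X_s,t_s}$.

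The only mild subtlety, and the main point to double-check, is that the polynomial-area-growth hypothesis needed to invoke Lemma \ref{lemma 10} is in force, so that \eqref{eq:1001} and \eqref{eq:1003} are actually available; this is implicit in the standing assumptions for the variational setup. Beyond that the corollary is essentially a repackaging of Theorem \ref{theorem 4}: the $y$- and $h$-variations contribute nothing once the Euler–Lagrange equation $H+\langle X, N\rangle=\lambda$ is satisfied, which is precisely the content of Lemma \ref{lemma 10}.
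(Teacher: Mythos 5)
Your proposal is correct and follows essentially the same route as the paper: the corollary is obtained exactly as a repackaging of Theorem \ref{theorem 4}, with the trivial direction by restricting variations and the converse by deriving $H+\langle X,N\rangle=\lambda$ from normal variations alone and then killing the $y$- and $h$-terms via \eqref{eq:1001} and \eqref{eq:1003} of Lemma \ref{lemma 10}. Your remark about the polynomial-area-growth hypothesis needed for Lemma \ref{lemma 10} is a fair observation, and it is implicit in the paper's treatment as well.
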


\vskip 5mm

\section{The second variation of $\mathcal{F}$-functional}

\noindent
In this section, we shall give the second variation formula of $\mathcal{F}$-functional.
\begin{theorem}\label{theorem 5}
Let $X: M\rightarrow \mathbb{R}^{n+1}$ be a critical point of the functional $\mathcal{F}(s)=\mathcal{F}_{X_s,t_s}(s)$.  The second variation formula of $\mathcal{F}(s)$ for $X_0=0$ and $t_0=1$ is given by

\begin{equation*}
\aligned
 &\ \ \ \ (4\pi)^{\frac{n}{2}}\mathcal{F}^{''}(0)\\
 &=-\int_M fLf e^{-\frac{|X|^2}{2}}d\mu+\int_M \bigl(-|y|^2+\langle X,y\rangle ^2\bigl)e^{-\frac{|X|^2}{2}}d\mu\\
&\ \ \ +\int_M \biggl\{2\langle N,y\rangle +(n+1-|X|^2)\lambda h-2hH-2\lambda\langle X,y\rangle \biggl\}f e^{-\frac{|X|^2}{2}}d\mu\\
&\ \ \ +\int_M \biggl\{(|X|^2-n-1)\langle X,y\rangle \biggl\}h e^{-\frac{|X|^2}{2}}d\mu\\
&\ \ \ +\int_M \biggl\{\frac{n^2+2n}{4}-\frac{n+2}{2}|X|^2+\frac{|X|^4}{4}+\frac{3\lambda}{4}(\lambda-H)\biggl\}h^2 e^{-\frac{|X|^2}{2}}d\mu,
\endaligned
\end{equation*}
where the operator $L$ is defined by
$$L=\mathcal{L}+S+1-\lambda^2.$$
\end{theorem}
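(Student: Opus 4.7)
The plan is to differentiate the first variation identity \eqref{10-18-2} once more with respect to $s$ and evaluate at $s=0$ (with $X_0=0$, $t_0=1$), exploiting the critical point equation $H+\langle X,N\rangle=\lambda$ on $M$ together with the integral identities of Lemma \ref{lemma 10} to organize the resulting terms into the form stated.

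For the setup I would first record the standard evolution formulas for a normal variation $\partial_sX(s)=fN(s)$, namely
\[
\partial_s d\mu_s=-fH_s\,d\mu_s,\qquad \partial_sN(s)=-\nabla f,\qquad \partial_sH_s=-\Delta f-Sf,
\]
together with $\partial_sX_s=y$ and $\partial_st_s=h$. The exponent $-|X(s)-X_s|^2/(2t_s)$ has $s$-derivative $-\langle X(s)-X_s,fN(s)-y\rangle/t_s+|X(s)-X_s|^2h/(2t_s^2)$; the prefactor $(4\pi t_s)^{-n/2}$ contributes $-\tfrac{nh}{2t_s}$, while the scalar $(t_0/t_s)^{1/2}$ has first $s$-derivative $-\tfrac{h}{2t_s}(t_0/t_s)^{1/2}$ and second $s$-derivative equal to $3h^2/4$ at $s=0$ (for $t_0=1$).

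I would then differentiate each of the six integrals on the right of \eqref{10-18-2} separately and sort the outcomes by degree in $(f,y,h)$. The pure $f^2$ contribution comes from differentiating the first and second integrals: the combined $f$-coefficient $-(H+\langle X,N\rangle)+\lambda$ vanishes at $s=0$, so only the $s$-derivative of that coefficient survives, which via $\partial_sH_s$ and $\partial_sN(s)$ produces $\Delta f+Sf-f+\langle X,\nabla f\rangle$; Lemma \ref{lemma 1} then converts the $\langle X,\nabla f\rangle f$ piece into $f\mathcal{L}f$ under the Gaussian weight, assembling into $-f(\mathcal{L}f+Sf+f)$. Substituting $H=\lambda-\langle X,N\rangle$ into the residual $Hf^2$-type contribution that arises from the $\lambda\langle N(s),N\rangle f$ differentiation produces an extra $-\lambda^2 f^2$, giving $L=\mathcal{L}+S+1-\lambda^2$ and the leading term $-\int_M fLf\,e^{-|X|^2/2}d\mu$.

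The pure $h^2$ integrand with coefficient $\tfrac{3\lambda}{4}(\lambda-H)$ is produced by the second $s$-derivative of $(t_0/t_s)^{1/2}$ applied to $\mathbb{V}(0)=\int_M(\lambda-H)e^{-|X|^2/2}d\mu$, while the companion polynomial $\tfrac{n^2+2n}{4}-\tfrac{n+2}{2}|X|^2+\tfrac{|X|^4}{4}$ arises from the $\mathbb{A}(s)$ contribution by differentiating $(4\pi t_s)^{-n/2}$ and the exponent and simplifying via \eqref{eq:1003} and \eqref{eq:1010}. The $-|y|^2+\langle X,y\rangle^2$ terms come from differentiating the third integral of \eqref{10-18-2} once more and applying \eqref{eq:1011}; the remaining mixed terms $\{2\langle N,y\rangle+(n+1-|X|^2)\lambda h-2hH-2\lambda\langle X,y\rangle\}f$ and $(|X|^2-n-1)\langle X,y\rangle h$ emerge as the cross derivatives of the various products. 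The main obstacle I expect is organizational rather than conceptual: on the order of a dozen cross-type integrals of degrees $f^2$, $fy$, $fh$, $|y|^2$, $yh$, $h^2$ must be carefully bundled, and only by repeatedly substituting $H=\lambda-\langle X,N\rangle$ and applying Lemma \ref{lemma 10} do the extraneous pieces cancel so that the expression collapses to the clean form stated in the theorem.
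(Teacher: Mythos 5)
Your plan follows essentially the same route as the paper: differentiate each term of \eqref{10-18-2} in $s$, evaluate at $s=0$ using the critical point equation $H+\langle X,N\rangle=\lambda$ together with the evolution formulas for $H_s$, $N(s)$ and $d\mu_s$, and then simplify the cross terms with the identities of Lemma \ref{lemma 10}; the paper organizes the computation into three groups $I(s)$, $II(s)$, $III(s)$ corresponding to the $f$-, $y$- and $h$-parts of \eqref{10-18-2}, exactly as you propose, and your accounting of where the $h^2$, $|y|^2$ and mixed terms come from matches the paper's. One detail to fix before executing the computation: the paper's convention is $H'=\Delta f+Sf$, not $-\Delta f-Sf$, and with that sign the $s$-derivative of the coefficient $-(H_s+\langle X(s)-X_s,N(s)\rangle)$ equals $-\Delta f-Sf-f+\langle X,\nabla f\rangle=-(\mathcal{L}f+Sf+f)$ directly, so no integration by parts via Lemma \ref{lemma 1} is needed; as written, your expression $\Delta f+Sf-f+\langle X,\nabla f\rangle$ is not equal to $-(\mathcal{L}f+Sf+f)$. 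Likewise, the $-\lambda^2f^2$ completing $L=\mathcal{L}+S+1-\lambda^2$ arises from the term $(H_s+\langle X(s)-X_s,N(s)\rangle)\bigl(\langle X(s)-X_s,\partial_sX(s)-\partial_sX_s\rangle+H_sf\bigr)f$ evaluated at $s=0$, which gives $\lambda^2f^2-\lambda\langle X,y\rangle f$, rather than from substituting $H=\lambda-\langle X,N\rangle$ into a residual $Hf^2$ contribution of the $\lambda\langle N(s),N\rangle f$ term (whose $s$-derivative in fact vanishes since $\nabla f$ is tangential). These are sign-bookkeeping corrections rather than conceptual gaps; with them the plan reproduces the paper's proof.
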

\begin{proof}
Let
\begin{equation*}
\aligned
I(s)&=(4\pi t_s)^{-\frac{n}{2}}\int_M -(H_s+\langle \frac{X(s)-X_s}{t_s}, N(s)\rangle )fe^{-\frac{|X(s)-X_s|^2}{2t_s}}d\mu_s\\
&\ \ \ +(4\pi t_0)^{-\frac{n}{2}}\sqrt{\frac{t_0}{t_s}}\int_M \lambda f\langle N(s),N\rangle e^{-\frac{|X-X_0|^2}{2t_0}}d\mu,
\endaligned
\end{equation*}

\begin{equation*}
\aligned
II(s)&=(4\pi t_s)^{-\frac{n}{2}}\int_M \langle \frac{X(s)-X_s}{t_s},y\rangle e^{-\frac{|X(s)-X_s|^2}{2t_s}}d\mu_s\\
&\ \ \ +(4\pi t_0)^{-\frac{n}{2}}\sqrt{\frac{t_0}{t_s}}\int_M \lambda \langle -y,N\rangle e^{-\frac{|X-X_0|^2}{2t_0}}d\mu,
\endaligned
\end{equation*}

\begin{equation*}
\aligned
III(s)&=(4\pi t_s)^{-\frac{n}{2}}\int_M (-\frac{n}{2t_s}+\frac{|X(s)-X_s|^2}{2t_s^2})he^{-\frac{|X(s)-X_s|^2}{2t_s}}d\mu_s\\
&\ \ \ +(4\pi t_0)^{-\frac{n}{2}}\sqrt{\frac{t_0}{t_s}}\int_M -\frac{h\lambda}{2t_s}\langle X(s)-X_s,N\rangle e^{-\frac{|X-X_0|^2}{2t_0}}d\mu,
\endaligned
\end{equation*}

\noindent we have

\begin{equation*}
 \mathcal{F}^{'}(s)=I(s)+II(s)+III(s), \ \   \mathcal{F}^{''}(s)=I^{'}(s)+II^{'}(s)+III^{'}(s),
\end{equation*}

\begin{equation*}
\aligned
I^{'}(s)&=(4\pi t_s)^{-\frac{n}{2}}\int_M \frac{nh}{2t_s}(H_s+\langle \frac{X(s)-X_s}{t_s},N(s)\rangle) fe^{-\frac{|X(s)-X_s|^2}{2t_s}}d\mu_s\\
& \ \ \  +(4\pi t_s)^{-\frac{n}{2}}\int_M -\biggl(\frac{dH_s}{ds}
   +\langle \frac{\frac{\partial X(s)}{\partial s}-\frac{\partial X_s}{\partial s}}{t_s},N(s)\rangle -\langle \frac{X(s)-X_s}{t_s^2},N(s)\rangle h\\
&\ \ \ \ \ \ \ \ \ \ \ \ \ \ \ \ \ \ \ \ \ \ \ \ \ \   +\langle \frac{X(s)-X_s}{t_s},\frac{dN(s)}{ds}\rangle \biggl)fe^{-\frac{|X(s)-X_s|^2}{2t_s}}d\mu_s\\
&\ \ \ +(4\pi t_s)^{-\frac{n}{2}}\int_M -(H_s+\langle \frac{X(s)-X_s}{t_s}, N(s)\rangle )f^{'}e^{-\frac{|X(s)-X_s|^2}{2t_s}}d\mu_s\\
&\ \ \ +(4\pi t_s)^{-\frac{n}{2}}\int_M (H_s+\langle \frac{X(s)-X_s}{t_s},N(s)\rangle )\times\\
&\ \ \ \ \ \ \ \ \ \ \ \ \ \ \ \ \ \ \ \ \ \ \ (\langle \frac{X(s)-X_s}{t_s},
   \frac{\partial X(s)}{\partial s}-\frac{\partial X_s}{\partial s}\rangle +H_sf)fe^{-\frac{|X(s)-X_s|^2}{2t_s}}d\mu_s\\
&\ \ \  +(4\pi t_s)^{-\frac{n}{2}}\int_M-(H_s+\langle \frac{X(s)-X_s}{t_s},N(s)\rangle )f\frac{|X(s)-X_s|^2}{2t_s^2}h\\
&  \ \ \ \ \ \ \ \ \ \ \ \ \ \ \ \ \ \ \ \ \ \ \ \times e^{-\frac{|X(s)-X_s|^2}{2t_s}}d\mu_s\\
   & \ \ \ +(4\pi t_0)^{-\frac{n}{2}}\sqrt{\frac{t_0}{t_s}}\int_M -\frac{h}{2t_s}\lambda \langle N(s),N\rangle fe^{-\frac{|X-X_0|^2}{2t_0}}d\mu\\
   & \ \ \   +(4\pi t_0)^{-\frac{n}{2}}\sqrt{\frac{t_0}{t_s}}\int_M \lambda f^{'}\langle N(s),N\rangle e^{-\frac{|X-X_0|^2}{2t_0}}d\mu\\
&  \ \ \  +(4\pi t_0)^{-\frac{n}{2}}\sqrt{\frac{t_0}{t_s}}\int_M \lambda f\langle \frac{dN(s)}{ds},N\rangle e^{-\frac{|X-X_0|^2}{2t_0}}d\mu,
\endaligned
\end{equation*}

\begin{equation*}
\aligned
&\ \ II^{'}(s)\\
&=(4\pi t_s)^{-\frac{n}{2}}(-\frac{nh}{2t_s})\int_M\langle \frac{X(s)-X_s}{t_s},y\rangle e^{-\frac{|X(s)-X_s|^2}{2t_s}}d\mu_s\\
&  \ \ \ +(4\pi t_s)^{-\frac{n}{2}}\int_M (\langle \frac{\frac{\partial X(s)}{\partial s}-\frac{\partial X_s}{\partial s}}{t_s},y\rangle
  -\langle \frac{X(s)-X_s}{t_s^2},y\rangle h)e^{-\frac{|X(s)-X_s|^2}{2t_s}}d\mu_s\\
& \ \  \ +(4\pi t_s)^{-\frac{n}{2}}\int_M\langle \frac{X(s)-X_s}{t_s},y^{'}\rangle e^{-\frac{|X(s)-X_s|^2}{2t_s}}d\mu_s \\
&  \ \ \  +(4\pi t_s)^{-\frac{n}{2}}\int_M \langle \frac{X(s)-X_s}{t_s},y\rangle \biggl(-\langle \frac{X(s)-X_s}{t_s},
  \frac{\partial X(s)}{\partial s}-\frac{\partial X_s}{\partial s}\rangle\
-H_sf\biggl)e^{-\frac{|X(s)-X_s|^2}{2t_s}}d\mu_s\\
&\ \ \   +(4\pi t_s)^{-\frac{n}{2}}\int_M \langle \frac{X(s)-X_s}{t_s},y\rangle \frac{|X(s)-X_s|^2}{2t_s^2}he^{-\frac{|X(s)-X_s|^2}{2t_s}}d\mu_s\\
& \ \ \ +(4\pi t_0)^{-\frac{n}{2}}\sqrt{\frac{t_0}{t_s}}(-\frac{h}{2t_s})\int_M -\lambda\langle N,y\rangle e^{-\frac{|X-X_0|^2}{2t_0}}d\mu
 +(4\pi t_0)^{-\frac{n}{2}}\sqrt{\frac{t_0}{t_s}}\int_M -\lambda\langle N, y^{'}\rangle e^{-\frac{|X-X_0|^2}{2t_0}}d\mu,
\endaligned
\end{equation*}

\begin{equation*}
\aligned
&\ \ III^{'}(s)\\
&=(4\pi t_s)^{-\frac{n}{2}}(-\frac{nh}{2t_s})\int_M (-\frac{n}{2t_s}+\frac{|X(s)-X_s|^2}{2t_s^2})he^{-\frac{|X(s)-X_s|^2}{2t_s}}d\mu_s\\
&  \ \ \ +(4\pi t_s)^{-\frac{n}{2}}\int_M (\frac{nh}{2t_s^2}-\frac{|X(s)-X_s|^2}{t_s^3}h
+\frac{\langle X(s)-X_s,\frac{\partial X(s)}{\partial s}-\frac{\partial X_s}{\partial s}\rangle }{t_s^2})\times\\
&  \ \ \ \ \ \ \ \ \ \ \ \ \ \ \ \ \ \ \ \ \ \ he^{-\frac{|X(s)-X_s|^2}{2t_s}}d\mu_s\\
& \ \ \ +(4\pi t_s)^{-\frac{n}{2}}\int_M(-\frac{n}{2t_s}+\frac{|X(s)-X_s|^2}{2t_s^2})h^{'}e^{-\frac{|X(s)-X_s|^2}{2t_s}}d\mu_s\\
&\ \ \   +(4\pi t_s)^{-\frac{n}{2}}\int_M (-\frac{n}{2t_s}+\frac{|X(s)-X_s|^2}{2t_s^2})h(-H_sf\\
&  \ \ \ \ \ \ \ \ \ \ \ \ \ \ \ \ \ \ \ \ \
  -\langle \frac{X(s)-X_s}{t_s},\frac{\partial X(s)}{\partial s}-\frac{\partial X_s}{\partial s}\rangle )e^{-\frac{|X(s)-X_s|^2}{2t_s}}d\mu_s\\
&\ \ \   +(4\pi t_s)^{-\frac{n}{2}}\int_M (-\frac{n}{2t_s}+\frac{|X(s)-X_s|^2}{2t_s^2})h
 \frac{|X(s)-X_s|^2}{2t_s^2}he^{-\frac{|X(s)-X_s|^2}{2t_s}}d\mu_s\\
& \ \ \ +(4\pi t_0)^{-\frac{n}{2}}\sqrt{\frac{t_0}{t_s}}(-\frac{h}{2t_s})
   \int_M -\frac{h}{2t_s}\lambda \langle X(s)-X_s, N\rangle e^{-\frac{|X-X_0|^2}{2t_0}}d\mu\\
&\ \ \  +(4\pi t_0)^{-\frac{n}{2}}\sqrt{\frac{t_0}{t_s}}\int_M -\frac{h^{'}\lambda}{2t_s}\langle X(s)-X_s,N\rangle e^{-\frac{|X-X_0|^2}{2t_0}}d\mu
\\
&\ \ \  +(4\pi t_0)^{-\frac{n}{2}}\sqrt{\frac{t_0}{t_s}}\int_M (\frac{h}{2t_s^2}\langle X(s)-X_s,N\rangle \lambda h\\
&  \ \ \ \ \ \ \ \ \ \ \ \ \ \ \ \ \ \ \ \ \ \ \ \ \ \
 -\frac{1}{2t_s}\langle \frac{\partial X(s)}{\partial s}-\frac{\partial X_s}{\partial s},N\rangle \lambda h)e^{-\frac{|X-X_0|^2}{2t_0}}d\mu.
\endaligned
\end{equation*}

Since $X: M\rightarrow \mathbb{R}^{n+1}$ is a critical point, we get

\begin{equation*}
H+\langle \frac{X-X_0}{t_0},N\rangle =\lambda,
\end{equation*}

\begin{equation*}
\int_M (n+\lambda\langle X-X_0,N\rangle -\frac{|X-X_0|^2}{t_0})e^{-\frac{|X-X_0|^2}{2t_0}}d\mu=0,
\end{equation*}

\begin{equation*}
\int_M (\lambda\langle N,a\rangle -\langle \frac{X-X_0}{t_0},a\rangle )e^{-\frac{|X-X_0|^2}{2t_0}}d\mu=0.
\end{equation*}
On the other hand,
\begin{equation*}
H^{'}=\Delta f+Sf,\ \ N^{'}=-\nabla f.
\end{equation*}
Using of the above equations and letting $s=0$, we obtain
\begin{equation*}
\aligned
&\ \ \ \ (4\pi t_0)^{\frac{n}{2}}\mathcal{F}^{''}(0)\\
 &=\int_M -fLf e^{-\frac{|X-X_0|^2}{2t_0}}d\mu\\
&\ \ \ +\int_M \biggl(\frac{2}{t_0}\langle N,y\rangle +\frac{2h}{t_0}\langle \frac{X-X_0}{t_0},N\rangle +\frac{n-1}{t_0}\lambda h\\
 &\ \ \ \ \ \ \ \ \ \ \ \ -\frac{|X-X_0|^2}{t_0^2}\lambda h-2\lambda\langle \frac{X-X_0}{t_0},y\rangle \biggl)fe^{-\frac{|X-X_0|^2}{2t_0}}d\mu\\
&\ \ \ +\int_M \biggl(-\frac{n+2}{t_0}\langle \frac{X-X_0}{t_0},y\rangle +\frac{\lambda}{t_0}\langle N,y\rangle \\
&\ \ \ \ \ \ \ \ \ \ \ \ +\langle \frac{X-X_0}{t_0},y\rangle
  \frac{|X-X_0|^2}{t_0^2}\biggl)he^{-\frac{|X-X_0|^2}{2t_0}}d\mu\\
&\ \ \ +\int_M \biggl(\frac{n^2}{4t_0^2}+\frac{n}{2t_0^2}-\frac{n+2}{2t_0^3}|X-X_0|^2+\frac{|X-X_0|^4}{4t_0^4}\\
 &\ \ \ \ \ \ \ \ \ \ +\frac{3\lambda}{4t_0}\langle \frac{X-X_0}{t_0},N\rangle \biggl)h^2e^{-\frac{|X-X_0|^2}{2t_0}}d\mu\\
&\ \ \ +\int_M \biggl(-\frac{1}{t_0}\langle y,y\rangle +\langle \frac{X-X_0}{t_0},y\rangle ^2\biggl)e^{-\frac{|X-X_0|^2}{2t_0}}d\mu,
\endaligned
\end{equation*}
where the operator $L$ is defined by $L=\Delta+S+\frac{1}{t_0}-\langle \frac{X-X_0}{t_0},\nabla\rangle -\lambda^2$.
When $t_0=1$, $X_0=0$, then $L=\mathcal{L}+S+1-\lambda^2$.
\begin{equation*}
\aligned
&\ \ \ \ (4\pi)^{\frac{n}{2}}\mathcal{F}^{''}(0)\\
&=\int_M -fLf e^{-\frac{|X-|^2}{2}}d\mu\\
&\ \ \ +\int_M \biggl(2\langle N,y\rangle +2\lambda h+(n-1)\lambda h-2hH\\
&\ \ \ \ \ \ \ \ \ \ \ -|X|^2\lambda h-2\lambda\langle X,y\rangle \biggl)f e^{-\frac{|X|^2}{2}}d\mu\\
&\ \ \ +\int_M \biggl(\lambda\langle N,y\rangle -(n+2)\langle X,y\rangle +\langle X,y\rangle |X|^2\biggl)h e^{-\frac{|X|^2}{2}}d\mu\\
\endaligned
\end{equation*}

\begin{equation*}
\aligned
&\ \ \ +\int_M \biggl(\frac{n^2+2n}{4}-\frac{n+2}{2}|X|^2+\frac{|X|^4}{4}+\frac{3\lambda}{4}\langle X,N\rangle \biggl)h^2 e^{-\frac{|X|^2}{2}}d\mu\\
&\ \ \ +\int_M -(|y|^2-\langle X,y\rangle ^2)e^{-\frac{|X|^2}{2}}d\mu\\
&=\int_M -fLf e^{-\frac{|X|^2}{2}}d\mu\\
&\ \ \ +\int_M \biggl[2\langle N,y\rangle +(n+1-|X|^2)\lambda h-2hH-2\lambda\langle X,y\rangle \biggl]f e^{-\frac{|X-|^2}{2}}d\mu\\
&\ \ \ +\int_M \biggl\{(|X|^2-n-1)\langle X,y\rangle \biggl\}h e^{-\frac{|X|^2}{2}}d\mu\\
&\ \ \ +\int_M \biggl(\frac{n^2+2n}{4}-\frac{n+2}{2}|X|^2+\frac{|X|^4}{4}+\frac{3\lambda}{4}(\lambda-H)\biggl)h^2 e^{-\frac{|X|^2}{2}}d\mu\\
&\ \ \ +\int_M (-|y|^2+\langle X,y\rangle ^2)e^{-\frac{|X|^2}{2}}d\mu.
\endaligned
\end{equation*}
\end{proof}

\begin{definition}
One calls that a critical point $X: M\rightarrow \mathbb{R}^{n+1}$ of the $\mathcal{F}$-functional $\mathcal{F}_{X_s,t_s}(s)$
is $\mathcal{F}$-stable if, for every
normal variation $fN$, there exist variations of $X_0$ and $t_0$ such that  $\mathcal{F^{\prime\prime}}_{X_0,t_0}(0)\geq 0$;

\noindent
One calls that a critical point $X: M\rightarrow \mathbb{R}^{n+1}$ of the $\mathcal{F}$-functional $\mathcal{F}_{X_s,t_s}(s)$
is $\mathcal{F}$-unstable if there exist a
normal variation $fN$ such that for all variations of $X_0$ and $t_0$,   $\mathcal{F^{\prime\prime}}_{X_0,t_0}(0)< 0$.
\end{definition}

\begin{theorem}\label{theorem 6.2}
If  $r\leq \sqrt{n}$ or  $r>\sqrt{n+1}$, the $n$-dimensional round sphere $X: {S}^n(r)\rightarrow \mathbb{R}^{n+1}$
is  $\mathcal{F}$-stable;
If  $ \sqrt{n}< r\leq\sqrt{n+1}$,  the $n$-dimensional round sphere $X: {S}^n(r)\rightarrow \mathbb{R}^{n+1}$
is  $\mathcal{F}$-unstable.
\end{theorem}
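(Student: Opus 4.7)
The proof proceeds by decomposing a normal variation $fN$ on $S^n(r)$ into its spherical harmonic components and handling each part of the second variation formula independently. On $S^n(r)$ all ambient quantities are constants: $|X|^2=r^2$, $\langle X,N\rangle=r$, $S=n/r^2$, and the $\lambda$-hypersurface equation forces $H=-n/r$ and $\lambda=r-n/r$. Since $X=rN$ is normal to the sphere, $\mathcal L=\Delta$, so $L=\mathcal L+S+1-\lambda^2$ is diagonalised by spherical harmonics with eigenvalues $-\mu_k+S+1-\lambda^2$, where $\mu_k=k(k+n-1)/r^2$. The weight $e^{-|X|^2/2}=e^{-r^2/2}$ is constant, so standard $L^2$-orthogonality of spherical harmonics applies.

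Write $f=c_0+\phi+f_{\geq 2}$ with $\phi=\langle N,a\rangle$ the first-order part ($a\in\mathbb R^{n+1}$) and let $V=\mathrm{Vol}(S^n(r))$. Using $\langle X,y\rangle=r\langle N,y\rangle$ and $\int\langle N,y\rangle\,d\mu=0$, every cross integral in the second variation formula that mixes harmonic degrees vanishes, and so does the $y$-$h$ coupling term $\int(|X|^2-n-1)\langle X,y\rangle h\,d\mu$. Hence
\[
(4\pi)^{n/2}e^{r^2/2}\mathcal F''(0)=Q_0(c_0,h)+Q_1(\phi,y)+Q_{\geq 2}(f_{\geq 2}).
\]
A short computation shows $\mu_k+\lambda^2-S-1>0$ for every $k\geq 2$ (as a quadratic in $r^2$ it has discriminant $8n+1-4k^2-4k(n-1)<0$), so $Q_{\geq 2}\geq 0$. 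The $(c_0,h)$-piece has the form $Q_0=V[-L(1)c_0^2+Kc_0h+Ph^2]$ with $K=(n+1-r^2)\lambda+2n/r$ and $P=[r^4-(2n+1)r^2+n(n-1)]/4$; on the sphere one verifies the remarkable identities $K=L(1)r$ and $P=-L(1)r^2/4$, so that
\[
Q_0(c_0,h)=-VL(1)\bigl(c_0-\tfrac{r}{2}h\bigr)^2,
\]
and the choice $h=2c_0/r$ always makes $Q_0=0$. The $(\phi,y)$-piece is
\[
Q_1(\phi,y)=\frac{V}{n+1}\Bigl[(\lambda^2-1)|a|^2+2(1-\lambda r)\langle a,y\rangle+(r^2-n-1)|y|^2\Bigr],
\]
and the identities $1-\lambda r=-(r^2-n-1)$ and $\lambda^2-1-(r^2-n-1)=n(n-r^2)/r^2$ let one complete the square in $y$ to obtain
\[
\sup_y Q_1(\phi,y)=\frac{Vn(n-r^2)}{(n+1)r^2}|a|^2\ (r^2<n+1),\qquad \sup_y Q_1=+\infty\ (r^2>n+1),
\]
with $\sup_y Q_1=-nV|a|^2/(n+1)^2$ at $r^2=n+1$.

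Combining the three pieces, $\sup_{y,h}\mathcal F''(0)\geq 0$ for every $f$ precisely when $\sup_y Q_1\geq 0$ for every $a$, i.e.\ when $r\leq\sqrt{n}$ or $r>\sqrt{n+1}$; this yields the $\mathcal F$-stability claim in those ranges. For the unstable range $\sqrt{n}<r\leq\sqrt{n+1}$, one verifies $P<0$ on $r^2\in[n,n+1]$ (equal to $-n/2$ at the endpoints and more negative between them), and then takes the normal variation $f=\langle N,a\rangle$ with $a\neq 0$, so that $c_0=0$ and $f_{\geq 2}=0$: the remaining $Q_0(0,h)=Ph^2V\leq 0$ and $Q_1(\phi,y)$ is a concave quadratic in $y$ whose maximum is strictly negative, forcing $\mathcal F''(0)<0$ for every $(y,h)$. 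The main obstacle is the algebraic bookkeeping: tracking signs across the many cross terms of the second variation formula on the sphere, and in particular uncovering the identities $K=L(1)r$ and $P=-L(1)r^2/4$, which reduce the $(c_0,h)$-piece to a perfect square and collapse the whole stability question to a single sign condition on $n(n-r^2)/r^2$ together with the behaviour at $r^2=n+1$.
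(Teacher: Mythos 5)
Your proof is correct and follows essentially the same route as the paper: decompose $f$ into spherical harmonics, observe that the $(c_0,h)$-block is a perfect square (your identities $K=L(1)r$, $P=-L(1)r^2/4$ are exactly the paper's $[r^4-(2n+1)r^2+n(n-1)](\tfrac{a}{r}+\tfrac{h}{2})^2$ term), and reduce everything to the sign of the degree-one block. The only difference is cosmetic: for $r>\sqrt{n+1}$ you note the $y$-quadratic is convex and unbounded above, which handles in one stroke the two subcases the paper treats separately with the ansatz $y=kz$.
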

\begin{proof}
For the sphere ${S}^n(r)$, we have
$$
X=-rN,\ \ H=\frac{n}{r},\ \ S=\frac{H^2}{n}=\frac{n}{r^2},\ \ \lambda=H-r=\frac{n}{r}-r
$$
and
\begin{equation}
L f=\mathcal{L}f+(S+1-\lambda^2)f=\Delta f+(\frac{n}{r^2}+1-\lambda^2)f.
\end{equation}
Since we know that  eigenvalues $\mu_k$ of $\Delta$ on the sphere ${S}^n(r)$ are given by
\begin{equation}
\mu_k=\frac{k^2+(n-1)k}{r^2},
\end{equation}
and  constant functions are eigenfunctions corresponding to eigenvalue $\mu_0=0$.
For any constant vector $z\in \mathbb{R}^{n+1}$, we get
\begin{equation}
-\Delta\langle z,N\rangle =\Delta\langle z,\frac{X}{r}\rangle =\langle z,\frac{1}{r}HN\rangle =\frac{n}{r^2}\langle z,N\rangle ,
\end{equation}
that is, $\langle z,N\rangle $ is an eigenfunction of $\Delta$ corresponding to the first eigenvalue $\mu_1=\frac{n}{r^2}$.
Hence, for any  normal variation with the variation vector field $fN$,
we can choose a real number $a\in \mathbb{R}$ and a constant vector $z\in \mathbb{R}^{n+1}$ such that
\begin{equation}
f=f_0+a+\langle z,N\rangle ,
\end{equation}
and  $f_0$ is in the space spanned by all eigenfunctions corresponding to eigenvalues $\mu_k$ $(k\geq2)$ of $\Delta$ on ${S}^n(r)$.
Using Lemma \ref{lemma 10}, we get
\begin{equation}\label{eq:500}
\aligned
&\ \ \ \ (4\pi)^{\frac{n}{2}}e^{\frac{r^2}{2}}\mathcal{F}^{''}(0)\\
&=\int_{S^n(r)} -(f_0+a+\langle z,N\rangle )L(f_0+a+\langle z,N\rangle ) d\mu\\
&\ \ \ +\int_{S^n(r)} [2\langle N,y\rangle +(n+1-r^2)\lambda h-2\frac{n}{r}h+2\lambda\langle rN,y\rangle ] (f_0+a+\langle z,N\rangle ) d\mu\\
&\ \ \ +\int_{S^n(r)} \lambda\langle N,y\rangle (r^2-n-1)h d\mu\\
&\ \ \ +\int_{S^n(r)} (\frac{n^2+2n}{4}-\frac{n+2}{2}r^2+\frac{r^4}{4}+\frac{3}{4}r^2-\frac{3}{4}n)h^2 d\mu\\
&\ \ \ +\int_{S^n(r)} (-|y|^2+\langle X,y\rangle ^2)d\mu\\
&\geq\int_{S^n(r)} \biggl\{(\frac{n+2}{r^2}-1+\lambda^2)f_0^2-(\frac{n}{r^2}+1-\lambda^2)a^2+(\lambda^2-1)\langle z,N\rangle ^2\biggl\}d\mu\\
&\ \ \ +\int_{S^n(r)}\biggl\{2(1+\lambda r)\langle N,y\rangle \langle N,z\rangle +[(n+1-r^2)\lambda-2\frac{n}{r}]ah\biggl\}d\mu\\
&\ \ \ +\int_{S^n(r)}\frac{1}{4}[r^4-(2n+1)r^2+n(n-1)]h^2d\mu\\
&\ \ \ +\int_{S^n(r)}(-|y|^2+\langle X,y\rangle ^2)d\mu.
\endaligned
\end{equation}
From Lemma \ref{lemma 10}, we have
\begin{equation}\label{eq:501}
\int_{S^n(r)}(-|y|^2+\langle X,y\rangle ^2)d\mu=-\int_{S^n(r)} (1+\lambda r)\langle N,y\rangle ^2d\mu.
\end{equation}
Putting \eqref{eq:501} and $\lambda=\frac{n}{r}-r$ into \eqref{eq:500}, we obtain

\begin{equation}\label{eq:502}
\aligned
&\ \ \ \ (4\pi)^{\frac{n}{2}}e^{\frac{r^2}{2}}\mathcal{F}^{''}(0)\\
&\geq\int_{S^n(r)} \frac{1}{r^2}\biggl\{(r^2-n-\frac{1}{2})^2+\frac{7}{4}\biggl\}f_0^2d\mu\\
&\ \ \ +\int_{S^n(r)}[r^4-(2n+1)r^2+n(n-1)](\frac{a}{r}+\frac{h}{2})^2d\mu\\
&\ \ \ +\int_{S^n(r)}\frac{1}{r^2}[r^4-(2n+1)r^2+n^2]\langle z,N\rangle ^2d\mu\\
&\ \ \ +\int_{S^n(r)} 2(1+n-r^2)\langle N,y\rangle \langle N,z\rangle d\mu\\
&\ \ \ +\int_{S^n(r)} -(1+n-r^2)\langle N,y\rangle ^2d\mu.
\endaligned
\end{equation}
 If we choose $h=-\frac{2a}{r}$ and $y=kz$, then we have

\begin{equation}\label{eq:504}
\aligned
&\ \ \ \ (4\pi)^{\frac{n}{2}}e^{\frac{r^2}{2}}\mathcal{F}^{''}(0)\\
&\geq\int_{S^n(r)} \frac{1}{r^2}\biggl\{(r^2-n-\frac{1}{2})^2+\frac{7}{4}\biggl\}f_0^2d\mu\\
&\ \ \ +\int_{S^n(r)} \biggl\{\lambda^2-1+2(1+\lambda r)k-(1+\lambda r)k^2\biggl\}\langle z,N\rangle ^2d\mu\\
&=\int_{S^n(r)} \frac{1}{r^2}\biggl\{(r^2-n-\frac{1}{2})^2+\frac{7}{4}\biggl\}f_0^2d\mu\\
&\ \ \ +\int_{S^n(r)} \biggl\{\lambda^2+\lambda r-(1+\lambda r)(1-k)^2\biggl\}\langle z,N\rangle ^2d\mu.
\endaligned
\end{equation}
We next consider three cases:

\vskip 3mm
{\bf Case 1: $r\leq \sqrt{n}$}
\vskip3mm

\noindent
In this case, $\lambda\geq 0$. Taking $k=1$, then we get
$$\mathcal{F}^{''}(0)\geq 0.$$

\vskip3mm
{\bf Case 2: $r\geq\frac{1+\sqrt{1+4n}}{2}$}.
\vskip3mm

\noindent
In this case, $\lambda\leq -1$.  Taking $k=2$, we can get

$$\mathcal{F}^{''}(0)\geq 0.$$

\vskip3mm
{\bf Case 3: $\sqrt{n+1}< r< \frac{1+\sqrt{1+4n}}{2}$}.
\vskip3mm

\noindent
In this case, $-1< \lambda<0$, $1+\lambda r<0$, we can take $k$ such that
$(1-k)^2\geq\frac{\lambda(\lambda+r)}{1+\lambda r}$, then we have

$$\mathcal{F}^{''}(0)\geq 0.
$$
Thus, if $r\leq \sqrt{n}$ or  $r>\sqrt{n+1}$, the $n$-dimensional round sphere $X: {S}^n(r)\rightarrow \mathbb{R}^{n+1}$
is  $\mathcal{F}$-stable;

\vskip3mm
\noindent
If  $ \sqrt{n}< r\leq\sqrt{n+1}$,  the $n$-dimensional round sphere $X: {S}^n(r)\rightarrow \mathbb{R}^{n+1}$
is  $\mathcal{F}$-unstable. In fact,
in this case, $-1< \lambda< 0$, $1+\lambda r\geq0$.  We can choose $f$ such that $f_0=0$, then we have
\begin{equation}
\aligned
(4\pi)^{\frac{n}{2}}e^{\frac{r^2}{2}}\mathcal{F}^{''}(0)
&=\int_{S^n(r)} (\lambda^2-1)\langle z,N\rangle ^2d\mu\\
&\ \ \ +\int_{S^n(r)} 2(1+\lambda r)\langle N,y\rangle \langle N,z\rangle d\mu\\
&\ \ \ +\int_{S^n(r)}-(1+\lambda r)\langle N,y\rangle ^2d\mu\\
&=(\lambda^2+\lambda r)\int_{S^n(r)}\langle z,N\rangle ^2d\mu\\
&\ \ \ -(1+\lambda r)\int_{S^n(r)} (\langle z,N\rangle -\langle y,N\rangle )^2d\mu\\
&< 0.
\endaligned
\end{equation}
 This completes the proof of Theorem \ref{theorem 6.2}.
\end{proof}

\noindent
According to Theorem \ref{theorem 6.2}, we would like to propose the following:

\vskip2mm
\noindent
{\bf Problem 3.1}.  Is it possible to prove that
spheres $S^n(r)$ with $r\leq \sqrt n$ or $r>\sqrt {n+1}$ are  the only $\mathcal F$-stable
compact $\lambda$-hypersurfaces?
\begin{remark}   Colding and Minicozzi \cite{[CZ]} have proved that the sphere $S^n(\sqrt n)$ is
the only $\mathcal F$-stable compact self-shrinkers. In order to prove this result,
the property that the mean curvature $H$ is an eigenfunction of $L$-operator plays a very important role.
But for $\lambda$-hypersurfaces, the mean curvature $H$ is not an eigenfunction of $L$-operator in general.
\end{remark}

\vskip 5mm

\section{The weak stability of the weighted area functional for  weighted volume-preserving variations}

\noindent
Define
\begin{equation}
\mathcal{T}(s)=(4\pi t_s)^{-\frac{n}{2}}\int_M e^{-\frac{|X(s)-X_s|^2}{2t_s}}d\mu_s.
\end{equation}
We compute the first and the second variation formulas of the general $\mathcal{T}$-functional for weighted volume-preserving variations with fixed $X_0$ and $t_0$. By a direct calculation, we have
\begin{equation*}
\mathcal{T}^{'}(s)=(4\pi t_s)^{-\frac{n}{2}}\int_M -(H_s+\langle \frac{X(s)-X_s}{t_s}, N(s)\rangle )fe^{-\frac{|X(s)-X_s|^2}{2t_s}}d\mu_s,
\end{equation*}

\begin{equation*}
\aligned
\mathcal{T}^{''}(s)
&=(4\pi t_s)^{-\frac{n}{2}}\int_M -(H_s+\langle \frac{X(s)-X_s}{t_s}, N(s)\rangle )f^{'}e^{-\frac{|X(s)-X_s|^2}{2t_s}}d\mu_s\\
&\ \ \ +(4\pi t_s)^{-\frac{n}{2}}\int_M (H_s+\langle \frac{X(s)-X_s}{t_s},N(s)\rangle )\times\\
&\ \ \ \ \ \ \ \ \ \ \ \ \ \ \ \ \ \ \ \ \ \ \ (\langle \frac{X(s)-X_s}{t_s},
   \frac{\partial X(s)}{\partial s}\rangle +H_sf)fe^{-\frac{|X(s)-X_s|^2}{2t_s}}d\mu_s\\
   &    \ \ \  +(4\pi t_s)^{-\frac{n}{2}}\int_M -\biggl(\frac{dH_s}{ds}
   +\langle \frac{\frac{\partial X(s)}{\partial s}}{t_s},N(s)\rangle \\
   &  \ \ \ \ \ \ \ \ \ \ \ \ \ \ \ \ \ \ \ \ \ \ \ +\langle \frac{X(s)-X_s}{t_s},\frac{dN(s)}{ds}\rangle \biggl)fe^{-\frac{|X(s)-X_s|^2}{2t_s}}d\mu_s.
\endaligned
\end{equation*}

\begin{lemma}
$$\int_Mf^{'}(0)e^{-\frac{|X-X_0|^2}{2t_0}}d\mu=0.$$
\end{lemma}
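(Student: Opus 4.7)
The plan is to translate the weighted volume-preserving condition into a differential identity in $s$ and differentiate once more. The natural weighted volume here is the quantity
$$\mathbb{V}(s)=\int_M \langle X(s)-X_0,N\rangle\, e^{-\frac{|X-X_0|^2}{2t_0}}\,d\mu$$
already introduced in the proof of Lemma \ref{lemma 3} (with $X_s=X_0$ and $t_s=t_0$ fixed). A weighted volume-preserving variation is one for which $\mathbb{V}(s)\equiv\mathbb{V}(0)$, equivalently $\mathbb{V}'(s)\equiv 0$ for all $s$ in a neighborhood of $0$.

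First I would recall from the derivation of $\mathbb{V}'(s)$ in Lemma \ref{lemma 3} that, because $N$, $d\mu$ and the weight $e^{-|X-X_0|^2/(2t_0)}$ do not depend on $s$ and $\frac{\partial X_s}{\partial s}=0$, one has
$$\mathbb{V}'(s)=\int_M \langle f(s)N(s),N\rangle\, e^{-\frac{|X-X_0|^2}{2t_0}}\,d\mu.$$
Setting this expression identically equal to zero and differentiating once more in $s$ at $s=0$ yields
$$0=\int_M f'(0)\langle N,N\rangle\, e^{-\frac{|X-X_0|^2}{2t_0}}\,d\mu+\int_M f(0)\Bigl\langle \tfrac{dN(s)}{ds}\big|_{s=0},N\Bigr\rangle e^{-\frac{|X-X_0|^2}{2t_0}}\,d\mu.$$

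For the first integral, $\langle N,N\rangle=1$, so it equals $\int_M f'(0)\,e^{-|X-X_0|^2/(2t_0)}\,d\mu$. For the second, I would invoke the standard variation formula $N'(0)=-\nabla f(0)$, which is tangent to $M$, so $\langle N'(0),N\rangle\equiv 0$ and the second integral vanishes. Combining these observations gives
$$\int_M f'(0)\, e^{-\frac{|X-X_0|^2}{2t_0}}\,d\mu=0,$$
as claimed. There is no real obstacle: the only conceptual point to get right is identifying $\mathbb{V}$ as the correct notion of preserved weighted volume (so that the vanishing of $\mathbb{V}'(s)$ for all $s$, not just at $s=0$, is available) and then using that the $s$-derivative of the unit normal is tangential.
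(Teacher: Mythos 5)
Your argument is correct and is essentially the paper's own proof: both identify the preserved quantity as $\mathbb{V}(s)=\int_M\langle X(s)-X_0,N\rangle e^{-|X-X_0|^2/(2t_0)}d\mu$, use $\mathbb{V}'(s)=\int_M f(s)\langle N(s),N\rangle e^{-|X-X_0|^2/(2t_0)}d\mu\equiv 0$, and differentiate once more at $s=0$. You are in fact slightly more careful than the paper in explicitly noting that the term involving $\langle N'(0),N\rangle$ vanishes because $N'(0)=-\nabla f$ is tangential.
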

\begin{proof}
Since $V(t)=\int_M\langle X(t)-X_0,N\rangle e^{-\frac{|X-X_0|^2}{2t_0}}d\mu=V(0)$ for any $t$, we have
$$\int_Mf(t)\langle N(t),N\rangle e^{-\frac{|X-X_0|^2}{2t_0}}d\mu=0.
$$
Hence, we get
\begin{equation*}
\aligned
0&=\frac{d}{dt}|_{t=0}\int_Mf(t)\langle N(t),N\rangle e^{-\frac{|X-X_0|^2}{2t_0}}d\mu\\
 &=\int_M f^{'}(0)e^{-\frac{|X-X_0|^2}{2t_0}}d\mu.
\endaligned
\end{equation*}
\end{proof}

\noindent
Since $M$ is a critical point of $\mathcal{T}(s)$, we have
$$
H+\langle \frac{X-X_0}{t_0},N\rangle =\lambda.
$$
On the other hand, we have
\begin{equation}
H^{'}=\Delta f+Sf, \ \ N^{'}=-\nabla f.
\end{equation}
Then for $t_0=1$ and $X_0=0$, the second variation formula becomes
\begin{equation*}
 (4\pi)^{\frac{n}{2}}\mathcal{T}^{''}(0)=\int_M -f\bigl(\mathcal{L}f+(S+1-\lambda^2)f\bigl) e^{-\frac{|X|^2}{2}}d\mu.
\end{equation*}

\begin{theorem}\label{theorem 7.1}
Let $X: M\rightarrow \mathbb{R}^{n+1}$ be a critical point of the functional $\mathcal{T}(s)$ for the weighted volume-preserving variations with fixed $X_0=0$ and $t_0=1$.
The second variation formula of $\mathcal{T}(s)$  is given by
\begin{equation}
\aligned
&\ \ \ \ (4\pi)^{\frac{n}{2}}\mathcal{T}^{''}(0)=\int_M -f\bigl(\mathcal{L}f+(S+1-\lambda^2)f\bigl) e^{-\frac{|X|^2}{2}}d\mu.
\endaligned
\end{equation}
\end{theorem}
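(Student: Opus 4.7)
The plan is to start from the expression for $\mathcal{T}''(s)$ already displayed in the excerpt and specialize it to $s=0$ with $X_0=0$, $t_0=1$. At $s=0$ we have $X(0)=X$, $X_s=X_0=0$, $t_s=t_0=1$, $N(0)=N$, and $\frac{\partial X(s)}{\partial s}|_{s=0}=fN$, so the three groups of terms in $\mathcal{T}''(s)$ reduce to integrals over $M$ with weight $e^{-|X|^2/2}$, and the critical point condition $H+\langle X,N\rangle=\lambda$ lets us replace the factor $H_s+\langle (X(s)-X_s)/t_s,N(s)\rangle$ by $\lambda$ throughout.

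First I would handle the $f'$-term: it collapses to $-\lambda\int_M f'(0)e^{-|X|^2/2}d\mu$, which vanishes by the preceding lemma (the content of the weighted volume-preserving constraint). Next I would compute the second group, where the factor becomes
\begin{equation*}
\lambda\bigl(\langle X,fN\rangle+Hf\bigr)=\lambda\bigl(f\langle X,N\rangle+Hf\bigr)=\lambda\bigl((\lambda-H)f+Hf\bigr)=\lambda^2 f,
\end{equation*}
so this group contributes $\lambda^2\int_M f^2 e^{-|X|^2/2}d\mu$. Finally, for the third group I would use $H'=\Delta f+Sf$ and $N'=-\nabla f$ (also given in the excerpt) to rewrite the integrand as $-(\Delta f+Sf+f-\langle X,\nabla f\rangle)f=-(\mathcal{L}f+Sf+f)f$, using the definition $\mathcal{L}f=\Delta f-\langle X,\nabla f\rangle$.

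Adding the three contributions gives
\begin{equation*}
(4\pi)^{n/2}\mathcal{T}''(0)=-\int_M f\bigl(\mathcal{L}f+Sf+f\bigr)e^{-|X|^2/2}d\mu+\lambda^2\int_M f^2 e^{-|X|^2/2}d\mu,
\end{equation*}
which is exactly $-\int_M f\bigl(\mathcal{L}f+(S+1-\lambda^2)f\bigr)e^{-|X|^2/2}d\mu$. There is no real obstacle here: the main second variation computation has effectively been done in Theorem \ref{theorem 5}, and the role of the volume-preserving constraint is precisely to eliminate the $f'$-term via the lemma above, while the absence of variations in $X_0$ and $t_0$ removes the $y$- and $h$-dependent terms that appeared in Theorem \ref{theorem 5}. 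The only minor care needed is bookkeeping in the collapse of the second group via $\langle X,N\rangle=\lambda-H$.
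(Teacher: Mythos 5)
Your proposal is correct and follows essentially the same route as the paper: specialize the displayed formula for $\mathcal{T}''(s)$ at $s=0$, kill the $f'$-term via the lemma $\int_M f'(0)e^{-|X|^2/2}d\mu=0$, reduce the second group to $\lambda^2\int_M f^2e^{-|X|^2/2}d\mu$ using $H+\langle X,N\rangle=\lambda$, and rewrite the third group with $H'=\Delta f+Sf$, $N'=-\nabla f$ to get $-(\mathcal{L}f+(S+1)f)f$. The bookkeeping in your collapse of the second group is exactly right, so nothing further is needed.
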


\begin{definition}
A critical point $X: M\rightarrow \mathbb{R}^{n+1}$ of the  functional $\mathcal{T}(s)$
is  called weakly stable if, for any  weighted volume-preserving normal variation,
  $\mathcal{T^{\prime\prime}}(0)\geq 0$;

\noindent
A critical point $X: M\rightarrow \mathbb{R}^{n+1}$ of the  functional $\mathcal{T}(s)$
is  called weakly unstable if there exists  a weighted volume-preserving normal variation,
  such that $\mathcal{T^{\prime\prime}}(0)< 0$.
\end{definition}

\begin{theorem}\label{theorem 7.2}
If  $r\leq \frac{-1+\sqrt{1+4n}}{2}$ or  $r\geq \frac{1+\sqrt{1+4n}}{2}$, the $n$-dimensional round sphere $X: {S}^n(r)\rightarrow \mathbb{R}^{n+1}$
is   weakly stable;
If  $ \frac{-1+\sqrt{1+4n}}{2}< r < \frac{1+\sqrt{1+4n}}{2}$,  the $n$-dimensional round sphere $X: {S}^n(r)\rightarrow \mathbb{R}^{n+1}$
is  weakly unstable.
\end{theorem}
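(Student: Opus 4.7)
The plan is to diagonalize the quadratic form $\mathcal{T}''(0)$ on $S^n(r)$ by spherical harmonics and translate the resulting spectral inequality into a condition on $r$.

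First I would simplify the operator $L = \mathcal{L} + S + 1 - \lambda^{2}$ on the sphere. Since $X = -rN$ is normal to $S^n(r)$, one has $\langle X,\nabla f\rangle = 0$, so $\mathcal{L}f = \Delta f$. Combined with $S = n/r^{2}$ and the fact that the weight $e^{-|X|^{2}/2} = e^{-r^{2}/2}$ is constant on $S^n(r)$, Theorem~\ref{theorem 7.1} becomes
\begin{equation*}
(4\pi)^{n/2} e^{r^{2}/2}\, \mathcal{T}''(0) = -\int_{S^n(r)} f\Big(\Delta f + \big(\tfrac{n}{r^{2}} + 1 - \lambda^{2}\big)f\Big)\, d\mu.
\end{equation*}
Differentiating the weighted-volume constraint $V(t)\equiv V(0)$ at $t=0$ gives $\int_{S^n(r)} f\langle N,N\rangle\, d\mu = 0$, which on the sphere is just $\int_{S^n(r)} f\, d\mu = 0$.

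Next I would expand $f$ in $\Delta$-eigenfunctions, using the fact that the Laplace spectrum on $S^n(r)$ is $\mu_k = (k^{2} + (n-1)k)/r^{2}$ with $\mu_0 = 0$ corresponding to the constants. The mean-zero constraint kills exactly the $k=0$ component, so writing $f = \sum_{k\geq 1} f_k$ with $\Delta f_k = -\mu_k f_k$ and integrating by parts yields
\begin{equation*}
(4\pi)^{n/2} e^{r^{2}/2}\, \mathcal{T}''(0) = \sum_{k\geq 1}\Big(\mu_k - \tfrac{n}{r^{2}} - 1 + \lambda^{2}\Big) \int_{S^n(r)} f_k^{2}\, d\mu.
\end{equation*}
The coefficients are strictly increasing in $k$, and the smallest one, at $k = 1$ with $\mu_1 = n/r^{2}$, equals $\lambda^{2} - 1$. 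Hence weak stability is equivalent to $\lambda^{2} \geq 1$; in the complementary range the first spherical harmonic $f = \langle z, N\rangle$ (for any nonzero constant $z \in \mathbb{R}^{n+1}$), which has zero mean on $S^n(r)$ by antipodal symmetry, produces $\mathcal{T}''(0) < 0$ and exhibits weak instability.

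Finally, substituting $\lambda = n/r - r$ into $\lambda^{2} \geq 1$ and solving separates into two cases: $\lambda \geq 1$ rewrites as $r^{2} + r - n \leq 0$, giving $r \leq (-1+\sqrt{1+4n})/2$, while $\lambda \leq -1$ rewrites as $r^{2} - r - n \geq 0$, giving $r \geq (1+\sqrt{1+4n})/2$, exactly the stable ranges claimed. The only place requiring care is the reduction of the volume-preserving condition to the mean-zero constraint and the observation that $X$ is normal so $\mathcal{L}$ collapses to $\Delta$; once these are in hand, the rest is elementary spectral bookkeeping, and in particular no second-order constraint on $f'(0)$ is needed since its contribution to the second variation already vanishes at the critical point via $H+\langle X,N\rangle=\lambda$ and the lemma preceding Theorem~\ref{theorem 7.1}.
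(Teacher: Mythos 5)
Your proposal is correct and follows essentially the same route as the paper: reduce $L$ to $\Delta+\tfrac{n}{r^2}+1-\lambda^2$ on $S^n(r)$, use the mean-zero constraint from weighted-volume preservation to discard the constant mode, diagonalize by spherical harmonics so that the first eigenmode $\langle z,N\rangle$ carries the critical coefficient $\lambda^2-1$, and translate $\lambda^2\geq 1$ with $\lambda=\tfrac{n}{r}-r$ into the stated ranges of $r$. The only cosmetic difference is that you keep the full spectral sum over $k\geq 1$, whereas the paper lumps all $k\geq 2$ modes into a single function $f_0$ and bounds its coefficient by $\tfrac{1}{r^2}\bigl\{(r^2-n-\tfrac12)^2+\tfrac74\bigr\}>0$; the conclusions agree.
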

\begin{proof}
For the sphere ${S}^n(r)$, we have
$$
X=-rN,\ \ H=\frac{n}{r},\ \ S=\frac{n}{r^2},\ \ \lambda=H-r=\frac{n}{r}-r
$$
and
\begin{equation}
Lf=\mathcal{L}f+(S+1-\lambda^2)f=\Delta f+(\frac{n}{r^2}+1-\lambda^2)f.
\end{equation}
Since we know that  eigenvalues $\mu_k$ of $\Delta$ on the sphere ${S}^n(r)$ are given by
\begin{equation}
\mu_k=\frac{k^2+(n-1)k}{r^2},
\end{equation}
and  constant functions are eigenfunctions corresponding to eigenvalue $\mu_0=0$. For any constant vector $z\in \mathbb{R}^{n+1}$, we get
\begin{equation}
-\Delta\langle z,N\rangle =\frac{n}{r^2}\langle z,N\rangle ,
\end{equation}
that is, $\langle z,N\rangle $ is an eigenfunction of $\Delta$ corresponding to the first eigenvalue $\mu_1=\frac{n}{r^2}$.
Hence, for any weighted volume-preserving normal variation with the variation vector field $fN$ satisfying
$$
\int_{S^n(r)} fe^{-\frac{r^2}{2}}d\mu=0,
$$
we can choose  a constant vector $z\in \mathbb{R}^{n+1}$ such that

\begin{equation}
f=f_0+\langle z,N\rangle ,
\end{equation}
and  $f_0$ is in the space spanned by all eigenfunctions corresponding to eigenvalues $\mu_k$ $(k\geq2)$ of $\Delta$ on ${S}^n(r)$.
By making use of Theorem  \ref{theorem 7.1}, we have
\begin{equation}\label{eq:700}
\aligned
&\ \ \ \ (4\pi)^{\frac{n}{2}}e^{\frac{r^2}{2}}\mathcal{T}^{''}(0)\\
&=\int_{S^n(r)} -(f_0+\langle z,N\rangle )L(f_0+\langle z,N\rangle ) d\mu\\
&\geq\int_{S^n(r)} \biggl\{(\frac{n+2}{r^2}-1+\lambda^2)f_0^2+(\lambda^2-1)\langle z,N\rangle ^2\biggl\}d\mu.
\endaligned
\end{equation}
According to  $\lambda=\frac{n}{r}-r$, we obtain

\begin{equation*}
\aligned
&(4\pi)^{\frac{n}{2}}e^{\frac{r^2}{2}}\mathcal{T}^{''}(0)\\
&\geq\int_{S^n(r)} \frac{1}{r^2}\bigl\{(r^2-n-\frac{1}{2})^2+\frac{7}{4}\bigl\}f_0^2d\mu
 +\int_{S^n(r)} (\frac nr-r-1)(\frac nr-r+1)\langle z,N\rangle ^2d\mu\geq 0
\endaligned
\end{equation*}
if
$$
r\leq \dfrac{-1+\sqrt{4n+1}}2  \ \ \ {\rm or} \ \ \ r\geq \dfrac{1+\sqrt{4n+1}}2 .
$$
Thus,  the $n$-dimensional round sphere $X: {S}^n(r)\rightarrow \mathbb{R}^{n+1}$
is  weakly stable.

\noindent
If
$$
 \dfrac{-1+\sqrt{4n+1}}2 < r<\dfrac{1+\sqrt{4n+1}}2,
$$
 choosing $f=<z, N>$,
 we have
 $$
\int_{S^n(r)} fe^{-\frac{r^2}{2}}d\mu=0.
$$
Hence,  there exists a weighted volume-preserving normal variation
with the variation vector filed $fN$ such that
\begin{equation*}
\aligned
(4\pi)^{\frac{n}{2}}e^{\frac{r^2}{2}}\mathcal{T}^{''}(0)
=\int_{S^n(r)} (\frac nr-r-1)(\frac nr-r+1)\langle z,N\rangle ^2d\mu< 0.
\endaligned
\end{equation*}
Thus, the $n$-dimensional round sphere $X: {S}^n(r)\rightarrow \mathbb{R}^{n+1}$
is  weakly unstable.
It finishes the proof.
\end{proof}

\begin{remark}
From Theorem \ref{theorem 6.2} and Theorem \ref{theorem 7.2}, we know the $\mathcal{F}$-stability and
the weak stability are different. The $\mathcal{F}$-stability is a weaker notation than
the weak stability.
\end{remark}
\begin{remark}
Is it possible to prove that
spheres $S^n(r)$ with $r\leq \frac{-1+\sqrt{1+4n}}{2}$ or  $r\geq \frac{1+\sqrt{1+4n}}{2}$ are  the only  weak stable
compact $\lambda$-hypersurfaces?
\end{remark}
\vskip 5mm

\section{Properness and polynomial  area growth for $\lambda$-hypersurfaces}

\noindent
For $n$-dimensional complete and non-compact Riemannian manifolds with nonnegative Ricci curvature, the well-known
theorem of Bishop and Gromov says that  geodesic balls have at most polynomial area growth:
$$
{\rm Area} (B_r(x_0))\leq Cr^n.
$$
For $n$-dimensional complete and non-compact gradient shrinking Ricci soliton, Cao and Zhou \cite{CZ} have proved
geodesic balls have at most polynomial area growth.
For self-shrinkers, Ding and Xin \cite{[DX1]} proved that any complete non-compact properly immersed self-shrinker in the Euclidean space has polynomial area growth. X. Cheng and Zhou \cite{[CZ]} showed that any complete immersed self-shrinker with polynomial area growth in the Euclidean space is proper. Hence any complete immersed self-shrinker is proper if and only if it has polynomial area growth.

\noindent It is our purposes in this section to study the area growth for
$\lambda$-hypersurfaces.
First of all,  we study the equivalence of properness and polynomial area growth for  $\lambda$-hypersurfaces.
If $X: M\rightarrow \mathbb{R}^{n+1}$ is an $n$-dimensional hypersurface in $\mathbb{R}^{n+1}$,
we say $M$ has polynomial area growth if there exist constant $C$ and $d$ such that for all $r\geq 1$,
\begin{equation}
{\rm Area}(B_r(0)\cap X(M))=\int_{B_r(0)\cap X(M)}d\mu\leq Cr^d,
\end{equation}
where $B_r(0)$ is a round ball in $\mathbb{R}^{n+1}$ with radius $r$ and centered at the origin.

\begin{theorem}\label{theorem 9.1}
Let $X: M\rightarrow \mathbb{R}^{n+1}$ be a complete and non-compact properly immersed
$\lambda$-hypersurface in the Euclidean space $\mathbb{R}^{n+1}$.
Then, there is a positive constant $C$ such that for $r\geq1$,
\begin{equation}
{\rm Area}(B_r(0)\cap X(M))=\int_{B_r(0)\cap X(M)}d\mu\leq Cr^{n+\frac{\lambda^2}2-2\beta-\frac{\inf H^2}2},
\end{equation}
where  $\beta=\frac{1}{4}\inf(\lambda-H)^2$.
\end{theorem}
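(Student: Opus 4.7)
The plan is to follow the Ding--Xin and Cheng--Zhou blueprint used for proper self-shrinkers: derive a pointwise Laplacian upper bound $\Delta|X|^2\le 2D$ on $M$, convert it into a boundary flux inequality on extrinsic balls via Stokes' theorem, and then extract the polynomial area bound through a coarea/Cauchy--Schwarz argument. The target exponent $D=n+\tfrac{\lambda^2}{2}-2\beta-\tfrac{\inf H^2}{2}$ already tells us what algebraic rearrangement to aim for.

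For the pointwise bound I would start from the general identity $\tfrac12\Delta|X|^2=n+H\langle X,N\rangle$, valid for any hypersurface in $\mathbb{R}^{n+1}$ (it follows from $\Delta X=-HN$ and $|\nabla X|^2=n$). Substituting the $\lambda$-hypersurface equation $\langle X,N\rangle=\lambda-H$ and using the algebraic identity $2\lambda H=\lambda^2+H^2-(\lambda-H)^2$ give
\[
\tfrac12\Delta|X|^2=n+\tfrac{\lambda^2}{2}-\tfrac{(\lambda-H)^2}{2}-\tfrac{H^2}{2}\le D,
\]
using $(\lambda-H)^2\ge 4\beta$ and $H^2\ge \inf H^2$. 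Properness ensures that $M\cap\overline{B_r(0)}$ is compact for every $r\ge 1$, and for a.e.\ $r$ (Sard) the slice $M\cap\partial B_r$ is a smooth $(n-1)$-submanifold. Since $\nabla|X|^2=2X^T$, where $X^T$ denotes the tangential projection of $X$, and the outward unit conormal along $M\cap\partial B_r$ in $M$ is $X^T/|X^T|$, the divergence theorem yields the flux inequality
\[
2\int_{M\cap\partial B_r}|X^T|\,d\sigma=\int_{M\cap B_r}\Delta|X|^2\,d\mu\le 2D\,A(r),\qquad A(r):=\operatorname{Area}(B_r(0)\cap X(M)).
\]

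To convert this flux inequality to the desired polynomial bound, I would combine the coarea identity $A'(r)=r\int_{M\cap\partial B_r}\frac{d\sigma}{|X^T|}$ with the Cauchy--Schwarz estimate
\[
|M\cap\partial B_r|^2\le\Bigl(\int_{M\cap\partial B_r}|X^T|\,d\sigma\Bigr)\Bigl(\int_{M\cap\partial B_r}\frac{d\sigma}{|X^T|}\Bigr)\le \frac{D}{r}A(r)A'(r),
\]
and the pointwise bound $|X^T|\le r$ on $\partial B_r$ (which gives $|M\cap\partial B_r|\le A'(r)$), aiming to extract a differential inequality of the form $rA'(r)\le DA(r)$ modulo controllable error, and then integrate via Gronwall to obtain $A(r)\le Cr^D$.

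The hard part is this last step. Where $X$ is nearly normal to $M$ on $\partial B_r$, the factor $1/|X^T|$ in the coarea expression for $A'(r)$ is unbounded, so the two inequalities $|M\cap\partial B_r|\le A'(r)$ and $|M\cap\partial B_r|^2\le \tfrac{D}{r}AA'$ cannot be combined in a straightforward way to yield $rA'\le DA$ pointwise. Closing the argument requires exploiting the flux bound $\int|X^T|\,d\sigma\le DA(r)$ itself---which quantitatively constrains how much of $M\cap\partial B_r$ can live in the near-normal region---to damp those singular contributions and drive the ODE for $A(r)$ into Gronwall form. This delicate interplay between the flux bound and the Cauchy--Schwarz estimate on the boundary slices is where the genuine technical content of the proof resides.
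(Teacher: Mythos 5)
Your opening computation is exactly the paper's: from $\tfrac12\Delta|X|^2=n+H\langle X,N\rangle$ and $\langle X,N\rangle=\lambda-H$ you get $\tfrac12\Delta|X|^2=n+\tfrac{\lambda^2}{2}-\tfrac{(\lambda-H)^2}{2}-\tfrac{H^2}{2}$, which correctly pins down the exponent $D=n+\tfrac{\lambda^2}{2}-2\beta-\tfrac{\inf H^2}{2}$. But the rest of your argument does not close, and you concede as much. The obstruction is more than a missing technicality: Cauchy--Schwarz bounds the product $\bigl(\int_{M\cap\partial B_r}|X^T|\,d\sigma\bigr)\bigl(\int_{M\cap\partial B_r}|X^T|^{-1}\,d\sigma\bigr)$ from \emph{below} by $|M\cap\partial B_r|^2$, so combining it with the flux bound $\int|X^T|\,d\sigma\le D\,A(r)$ and the coarea identity $A'(r)=r\int|X^T|^{-1}\,d\sigma$ only yields a \emph{lower} bound on $A'(r)$, never the upper bound $rA'(r)\le D\,A(r)$ that Gronwall requires. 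Equivalently, since $|X^T|^2=r^2-(\lambda-H)^2$ on $M\cap\partial B_r$, one has $rA'(r)=\int_{M\cap\partial B_r}|X^T|\,d\sigma+\int_{M\cap\partial B_r}\frac{(\lambda-H)^2}{|X^T|}\,d\sigma$, and the second, singular, near-normal term sits on the wrong side of the inequality; your sketch offers no mechanism to control it.

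The paper closes this gap by not working with the unweighted area of Euclidean balls at all. It sets $f=|X|^2/4$, verifies the two hypotheses $|\nabla(f-\beta)|^2\le f-\beta$ and $\Delta(f-\beta)-|\nabla(f-\beta)|^2+(f-\beta)\le \tfrac n2+\tfrac{\lambda^2}{4}-\beta-\tfrac{\inf H^2}{4}$, notes that properness of $X$ makes $f-\beta$ proper, and then invokes Theorem 2.1 of X.~Cheng and Zhou, which converts exactly these two differential inequalities for a proper function into the area growth $Cr^{2k}$ with $k=\tfrac n2+\tfrac{\lambda^2}{4}-\beta-\tfrac{\inf H^2}{4}$. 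The mechanism inside that cited theorem is a monotonicity argument for the \emph{weighted} quantity $\int e^{-f}d\mu$ over sublevel sets of $f$: the Gaussian weight turns $\Delta f-|\nabla f|^2$ into a drift Laplacian whose boundary flux has a favorable sign, and the condition $|\nabla f|^2\le f$ then transfers the weighted estimate back to the unweighted area. That weighting (or an equivalent substitute such as a Huisken-type monotonicity) is the idea your proposal is missing; without it the flux/coarea route you outline cannot be completed.
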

\begin{proof}
Since $X: M\rightarrow \mathbb{R}^{n+1}$
is  a complete and non-compact properly immersed
$\lambda$-hypersurface in the Euclidean space $\mathbb{R}^{n+1}$, we
have
$$
\langle X, N\rangle +H=\lambda.
$$
Defining $f=\frac{|{X}|^2}{4}$, we have
\begin{equation}
f-|{\nabla}f|^2=\frac{|{X}|^2}{4}-\frac{|{X}^{T}|^2}{4}
=\frac{|{X}^{\perp}|^2}{4}=\frac14(\lambda-{H})^2,
\end{equation}
\begin{equation}
\aligned
{\Delta} f&=\frac{1}{2}(n+{H}\langle N,{X}\rangle) \\
        &=\frac{1}{2}(n+\lambda \langle N,{X}\rangle-\langle N,{X}\rangle^2)\\
        &=\frac{1}{2}n+\frac{\lambda^2}4-\frac{H^2}4-f+|{\nabla}f|^2.
\endaligned
\end{equation}
Hence, we obtain
\begin{equation}
|{\nabla}{(f-\beta)}|^2\leq (f-\beta),
\end{equation}
\begin{equation}
{\Delta}{(f-\beta)}-|{\nabla}{(f-\beta)}|^2+ {(f-\beta)}\leq (\frac{n}{2}+\frac{\lambda^2}4-\beta -\frac{\inf H^2}4 ).
\end{equation}
Since the  immersion $X$ is proper, we know that $\overline{f}=f-\beta$ is proper.
Applying Theorem 2.1 of X. Cheng and Zhou \cite{[CZ]} to $\overline{f}=f-\beta$
with $k=(\frac{n}{2}+\frac{\lambda^2}4-\beta-\frac{\inf H^2}4)$, we obtain
 \begin{equation}
{\rm Area}(B_r(0)\cap X(M))=\int_{B_r(0)\cap X(M)}d\mu\leq Cr^{n+\frac{\lambda^2}2-2\beta-\frac{\inf H^2}2},
\end{equation}
where  $\beta=\frac{1}{4}\inf(\lambda-H)^2$ and $C$ is  a constant.
\end{proof}

\begin{remark}
The estimate in Theorem \ref{theorem 9.1} is the best possible because the cylinders $S^k(r_0)\times \mathbb{R}^{n-k}$
satisfy the equality.
\end{remark}
\begin{remark}
By making use of the same assertions as  in X. Cheng and Zhou \cite{[CZ]} for self-shrinkers,  we can prove
the weighted area of  a complete and non-compact properly immersed
$\lambda$-hypersurface in the Euclidean space $\mathbb{R}^{n+1}$
is bounded.
\end{remark}

\noindent
By making use of  to the same assertions  as in X. Cheng and Zhou \cite{[CZ]} for self-shrinkers,
we can prove the following theorem. We will leave it  for readers.
\begin{theorem}
If  $X: M\rightarrow \mathbb{R}^{n+1}$ is  an $n$-dimensional complete immersed $\lambda$-hypersurface with polynomial area  growth, then
$X: M\rightarrow \mathbb{R}^{n+1}$ is proper.
\end{theorem}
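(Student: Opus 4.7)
The plan is to run X. Cheng and Zhou's argument \cite{[CZ]} in the direction converse to its use in Theorem 9.1. Set $f = |X|^2/4$ and $\bar f = f - \beta$ with $\beta = \frac{1}{4}\inf(\lambda-H)^2$. The computation already performed in the proof of Theorem 9.1 verifies that on any $\lambda$-hypersurface,
\begin{equation*}
|\nabla \bar f|^2 \le \bar f, \qquad \Delta \bar f - |\nabla \bar f|^2 + \bar f \le C_0,
\end{equation*}
where $C_0 = \frac{n}{2} + \frac{\lambda^2}{4} - \beta - \frac{\inf H^2}{4}$. These are exactly the structural inequalities that feed into the Cheng--Zhou framework; Theorem 9.1 uses the implication ``$\bar f$ proper $\Rightarrow$ polynomial sublevel growth'', and here I would use the converse implication ``polynomial sublevel growth $\Rightarrow$ $\bar f$ proper''.

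First I would translate the hypothesis into the language of $\bar f$: since $\{\bar f \le r^2/4 - \beta\} \subset X^{-1}(\overline{B_r(0)})$, the polynomial area growth of $X$ yields polynomial intrinsic volume growth of the $\bar f$-sublevel sets. Next, argue by contradiction: if $X$ were not proper, then $X^{-1}(\overline{B_{R_0}(0)})$ would be non-compact for some $R_0$, producing (by completeness of $M$) a sequence $\{p_k\}$ in this preimage with $d_M(p_0,p_k)\to\infty$, i.e.\ $\bar f$ bounded along a divergent sequence. I would then apply the Cheng--Zhou method directly to $\bar f$: the gradient bound $|\nabla \bar f|\le \sqrt{\bar f}$ controls how fast $\bar f$ can grow along intrinsic geodesics issuing from the $p_k$, while the second differential inequality produces a lower bound on the intrinsic volume contained in each unit intrinsic-distance shell about $p_k$ that remains inside a fixed $\bar f$-sublevel. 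Letting $k\to\infty$, one exhibits a fixed sublevel $\{\bar f \le R\}$ of infinite intrinsic measure, contradicting the polynomial bound just translated from the hypothesis.

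The main obstacle, as in Theorem 9.1, is the extra term $\lambda\langle X,N\rangle = \lambda(\lambda-H)$ in $\mathcal{L}|X|^2$ which distinguishes the $\lambda$-hypersurface case from the self-shrinker case of \cite{[CZ]}; it depends on the a priori unbounded mean curvature $H$. This is handled exactly as in Theorem 9.1 by passing from $f$ to $\bar f = f - \beta$: the $(\lambda-H)^2$ contribution to $f - |\nabla f|^2$ is absorbed into the gradient inequality, and the remaining mean-curvature terms collapse into the finite constant $C_0$. Since the Cheng--Zhou argument depends only on the structural form of the two differential inequalities and not on the specific value or sign of the right-hand constant, the proof transfers without further modification.
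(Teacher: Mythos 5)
The paper gives no proof of this theorem, deferring to ``the same assertions as in X.~Cheng and Zhou,'' so the benchmark is their properness argument for self-shrinkers. Your contradiction scheme has the right skeleton (a divergent sequence $\{p_k\}$ with $X(p_k)\in\overline{B_{R_0}(0)}$ and pairwise intrinsic distance bounded below, a uniform lower bound on the area carried by the disjoint intrinsic unit balls about the $p_k$, hence infinite area inside the fixed ball $B_{R_0+1}(0)$, contradicting polynomial area growth), but the step on which everything hinges is unsupported. You claim that ``the second differential inequality produces a lower bound on the intrinsic volume contained in each unit intrinsic-distance shell about $p_k$.'' It does not. The two structural inequalities $|\nabla\bar f|^2\le\bar f$ and $\Delta\bar f-|\nabla\bar f|^2+\bar f\le C_0$ yield only the upper bound $\Delta\bar f\le C_0$, and an upper Laplacian bound on a single function carries no lower volume information: on a complete surface with a finite-volume cusp the constant function $\bar f\equiv c\le C_0$ satisfies both inequalities, has sublevel sets of finite (hence polynomially bounded) volume, and is not proper. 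So the ``converse implication'' of the Cheng--Zhou sublevel-set framework that you invoke is false at the level of generality at which you use it; that framework belongs entirely to the other direction, namely to Theorem \ref{theorem 9.1}.

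The ingredient you are missing is extrinsic. On $X^{-1}(B_{R_0+1}(0))$ the $\lambda$-hypersurface equation gives $|H|\le|\lambda|+|\langle X,N\rangle|\le|\lambda|+R_0+1$, and it is the mean value inequality for immersions with bounded mean curvature (the monotonicity formula; compare Lemma \ref{lemma:4-27-1} and its use in the proof of Lemma \ref{lemma:4-21-1}) that produces the uniform lower area bound of order $\kappa r^{n}$ for the piece of $X(M)$ through each $X(p_k)$. This is the only place the $\lambda$-equation actually enters, and it is exactly how Cheng--Zhou prove properness for self-shrinkers; the functions $f$ and $\bar f$ play no role in that direction. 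With that substitution your argument closes; as written, the central step does not follow from the inequalities on $\bar f$.
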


\vskip 2mm

\section{A lower bound growth of the area for  $\lambda$-hypersurfaces}

\noindent
For $n$-dimensional complete and non-compact Riemannian manifolds with nonnegative Ricci curvature, the well-known
theorem of Calabi and Yau says that geodesic balls have at least linear area growth:
$$
{\rm Area} (B_r(x_0))\geq Cr.
$$
Cao and Zhu \cite{CZu} have proved that  $n$-dimensional complete and non-compact gradient shrinking Ricci soliton
must have infinite volume. Furthermore, Munteanu and Wang \cite{MW} have proved that
areas of geodesic balls for  $n$-dimensional complete and non-compact gradient shrinking Ricci soliton
 has at least linear growth. For self-shrinkers, Li and Wei \cite{[LW2]} proved that any complete and non-compact proper self-shrinker has at least linear area growth.

\noindent In this section, we study the lower bound growth of the area for $\lambda$-hypersurfaces.
The following lemmas play a very important role in order to prove our results.

\begin{lemma}\label{lemma:4-20-1}
Let $X: M\rightarrow \mathbb{R}^{n+1}$ be an $n$-dimensional complete noncompact proper $\lambda$-hypersurface, then
there exist constants $C_1(n,\lambda)$ and $c(n,\lambda)$ such that for all $t\geq C_1(n,\lambda)$,
\begin{equation}
{\rm Area}(B_{t+1}(0)\cap X(M))-{\rm Area}(B_t(0)\cap X(M))\leq c(n,\lambda)\frac{{\rm Area}(B_t(0)\cap X(M))}{t}
\end{equation}
and
\begin{equation}
{\rm Area}(B_{t+1}(0)\cap X(M))\leq 2{\rm Area}(B_{t}(0)\cap X(M)).
\end{equation}
\end{lemma}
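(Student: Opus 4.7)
The plan is to combine the divergence theorem applied to $|X|^2$ on $B_t(0)\cap M$ with the coarea formula for the extrinsic distance $r(x)=|X(x)|$. The crucial identity comes from Lemma \ref{lemma 0}: writing $\Delta|X|^2=\mathcal{L}|X|^2+\langle X,\nabla|X|^2\rangle=\mathcal{L}|X|^2+2|X^T|^2$, using $|X^T|^2=|X|^2-\langle X,N\rangle^2$, and substituting $\langle X,N\rangle=\lambda-H$, one finds
\begin{equation*}
\Delta_M|X|^2=2n+2\lambda H-2H^2\leq 2n+\tfrac{\lambda^2}{2}.
\end{equation*}

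First I would apply the divergence theorem to $|X|^2$ on $B_t(0)\cap M$, which is compact by properness and has smooth boundary for a.e.\ $t$ by Sard. Setting $V(t):={\rm Area}(B_t(0)\cap X(M))$, this produces
\begin{equation*}
\int_{\partial B_t(0)\cap M}|X^T|\,dH^{n-1}=\int_{B_t(0)\cap M}(n+\lambda H-H^2)\,d\mu\leq\bigl(n+\tfrac{\lambda^2}{4}\bigr)V(t).
\end{equation*}
Because the middle integral is nonnegative, Young's inequality $\lambda H\leq\tfrac12 H^2+\tfrac12\lambda^2$ gives $\int_{B_t\cap M}H^2\,d\mu\leq(2n+\lambda^2)V(t)$, and the triangle bound $(\lambda-H)^2\leq 2\lambda^2+2H^2$ then yields $\int_{B_t\cap M}(\lambda-H)^2\,d\mu\leq 4(n+\lambda^2)V(t)=:C_2(n,\lambda)V(t)$.

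Next let $A_t=(B_{t+1}(0)\setminus B_t(0))\cap M$. Using $|\nabla_M r|=|X^T|/|X|$, one has the pointwise identity $1-|\nabla_M r|^2=\langle X,N\rangle^2/|X|^2=(\lambda-H)^2/|X|^2$. The coarea formula then produces
\begin{equation*}
\int_{A_t}|\nabla_M r|^2\,d\mu=\int_t^{t+1}\frac{1}{s}\int_{\partial B_s(0)\cap M}|X^T|\,dH^{n-1}\,ds\leq\frac{n+\lambda^2/4}{t}\,V(t+1),
\end{equation*}
while $|X|\geq t$ on $A_t$ gives
\begin{equation*}
\int_{A_t}(1-|\nabla_M r|^2)\,d\mu\leq\frac{1}{t^2}\int_{B_{t+1}(0)\cap M}(\lambda-H)^2\,d\mu\leq\frac{C_2}{t^2}\,V(t+1).
\end{equation*}
Summing these, since $|\nabla_M r|^2+(1-|\nabla_M r|^2)=1$, one obtains $V(t+1)-V(t)=\int_{A_t}d\mu\leq(C_3(n,\lambda)/t)V(t+1)$ for all $t\geq 1$, with $C_3=n+\lambda^2/4+C_2$. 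Rearranging, $V(t+1)(1-C_3/t)\leq V(t)$, so $V(t+1)\leq 2V(t)$ for $t\geq 2C_3$, which is the second inequality, and feeding this back gives $V(t+1)-V(t)\leq(2C_3/t)V(t)$, which is the first; one may take $C_1(n,\lambda)=c(n,\lambda)=2C_3$.

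The main technical point will be applying the divergence theorem uniformly in $t$: $\partial B_t\cap M$ is smooth only for a.e.\ $t$ by Sard, so the intermediate identities hold initially only for a.e.\ $t$ and must be promoted to all $t\geq C_1$ by a standard approximation argument that uses the monotonicity of $V$. The only other nontrivial bookkeeping is keeping $C_1$ and $c$ dependent on $n$ and $\lambda$ alone, which motivates the choice of Young's inequality rather than Cauchy-Schwarz in the $L^2$ bound for $H$.
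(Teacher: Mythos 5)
Your proof is correct, and while it starts from the same foundation as the paper -- the identity $\tfrac12\Delta|X|^2=n+\lambda H-H^2$, the divergence theorem giving $\int_{\partial B_s\cap M}|X^T|\,d\sigma=\int_{B_s\cap M}(n+\lambda H-H^2)\,d\mu\le(n+\tfrac{\lambda^2}{4})V(s)$, and the resulting $L^2$ bounds $\int_{B_s\cap M}H^2\,d\mu\le(2n+\lambda^2)V(s)$ and $\int_{B_s\cap M}(\lambda-H)^2\,d\mu\le C_2V(s)$ -- the second half takes a genuinely different route. The paper forms the weighted quantity $r^{-n-\lambda^2/4}V(r)$, computes its derivative from the identity
\begin{equation*}
(n+\tfrac{\lambda^2}{4})V(r)-rV'(r)=\int_{B_r\cap M}(H-\tfrac{\lambda}{2})^2\,d\mu-\int_{\partial B_r\cap M}\frac{(\lambda-H)^2}{|X^T|}\,d\sigma,
\end{equation*}
and integrates from $r_2$ to $r_1$, handling the boundary term by an integration by parts against $\bigl(\int_{B_r\cap M}(\lambda-H)^2\,d\mu\bigr)'=r\int_{\partial B_r\cap M}\frac{(\lambda-H)^2}{|X^T|}\,d\sigma$; specializing to $r_1=t+1$, $r_2=t$ then gives the lemma. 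You instead estimate the annulus directly by splitting $1=|\nabla_M r|^2+(1-|\nabla_M r|^2)$, controlling the first piece by the coarea formula together with the divergence-theorem bound on $\int_{\partial B_s\cap M}|X^T|\,d\sigma$, and the second by $|X|\ge t$ and the $L^2$ bound on $\lambda-H$. Your version is more localized and avoids the weighted monotonicity computation entirely, at the cost of only comparing $V(t)$ with $V(t+1)$; the paper's computation yields the stronger quasi-monotonicity of $r^{-n-\lambda^2/4}V(r)$ between arbitrary radii, which is of independent use (it is essentially the mechanism behind the polynomial upper area bound of Section 5). Both arguments share the same a.e.-$t$ caveat from Sard's theorem, which you correctly flag, and your constants depend only on $n$ and $\lambda$ as required.
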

\begin{proof}
Since $X: M\rightarrow \mathbb{R}^{n+1}$ is a complete $\lambda$-hypersurface, one has
\begin{equation}\label{eq:4-20-1}
\frac{1}{2}\Delta |X|^2=n+H\langle N,X\rangle=n+H\lambda-H^2.
\end{equation}
Integrating \eqref{eq:4-20-1} over $B_r(0)\cap X(M)$, we obtain
\begin{equation}\label{eq:4-20-5}
\aligned
&\ \ \ \ n{\rm Area}(B_{r}(0)\cap X(M))+\int_{B_r(0)\cap X(M)}H\lambda d\mu-\int_{B_r(0)\cap X(M)}H^2d\mu\\
 &=\frac{1}{2}\int_{B_r(0)\cap X(M)}\triangle |X|^2d\mu\\
&=\frac{1}{2}\int_{\partial(B_r(0)\cap X(M))}\nabla |X|^2\cdot\frac{\nabla \rho}{|\nabla\rho|}d\sigma\\
&=\int_{\partial(B_r(0)\cap X(M))}|X^{T}|d\sigma\\
&=\int_{\partial(B_r(0)\cap X(M))}\frac{|X|^2-(\lambda-H)^2}{|X^{T}|}d\sigma\\
&=r({\rm Area}(B_{r}(0)\cap X(M)))^{'}-\int_{\partial(B_r(0)\cap X(M))}\frac{(\lambda-H)^2}{|X^{T}|}d\sigma,
\endaligned
\end{equation}
where $\rho(x):=|X(x)|$, $\nabla \rho=\frac{X^T}{|X|}$.
Here we used, from the co-area formula,
\begin{equation}\label{eq:4-20-5-1}
\bigl({\rm Area}(B_{r}(0)\cap X(M))\bigl)^{'}=r\int_{\partial(B_r(0)\cap X(M))}\frac{1}{|X^{T}|}d\sigma.
\end{equation}
Hence, we obtain
\begin{equation}\label{eq:4-20-2}
\aligned
&\ \ \ \ (n+\frac{\lambda^2}{4}){\rm Area}(B_{r}(0)\cap X(M))-r({\rm Area}(B_{r}(0)\cap X(M)))^{'}\\
&=\int_{B_r(0)\cap X(M)}(H-\frac{\lambda}{2})^2d\mu-\int_{\partial(B_r(0)\cap X(M))}\frac{(\lambda-H)^2}{|X^{T}|}d\sigma,
\endaligned
\end{equation}
From \eqref{eq:4-20-5-1},   $(H-\lambda)^2=\langle N,{X}\rangle^2\leq |X|^2=r^2$ on $\partial(B_r(0)\cap X(M))$
and  \eqref{eq:4-20-2},
we conclude
\begin{equation}
\int_{B_{r}(0)\cap X(M)}(H-\frac{\lambda}{2})^2d\mu\leq (n+\frac{\lambda^2}{4}){\rm Area}(B_{r}(0)\cap X(M)).
\end{equation}
Furthermore, we have
\begin{equation}\label{eq:4-20-6}
\aligned
\int_{B_{r}(0)\cap X(M)}(H-\lambda)^2d\mu & \leq \int_{B_{r}(0)\cap X(M)}2\bigl[(H-\frac{\lambda}{2})^2
  +\frac{\lambda^2}{4}\bigl]d\mu\\
&\leq (2n+\lambda^2){\rm Area}(B_{r}(0)\cap X(M)),
\endaligned
\end{equation}
\begin{equation}\label{eq:12-25-1}
\aligned
\int_{B_{r}(0)\cap X(M)}H^2d\mu & \leq \int_{B_{r}(0)\cap X(M)}2\bigl[(H-\frac{\lambda}{2})^2
  +\frac{\lambda^2}{4}\bigl]d\mu\\
&\leq (2n+\lambda^2){\rm Area}(B_{r}(0)\cap X(M)).
\endaligned
\end{equation}
\eqref{eq:4-20-2} implies that
\begin{equation}\label{eq:4-20-3}
\aligned
&\ \ \ \ \bigl(r^{-n-\frac{\lambda^2}{4}}{\rm Area}(B_{r}(0)\cap X(M))\bigl)^{'}\\
&=r^{-n-1-\frac{\lambda^2}{4}}\biggl(r\bigl({\rm Area}(B_{r}(0)\cap X(M))\bigl)^{'}-(n+\frac{\lambda^2}{4})
{\rm Area}(B_{r}(0)\cap X(M))\biggl)\\
&=r^{-n-1-\frac{\lambda^2}{4}}\int_{\partial(B_r(0)\cap X(M))}\frac{(H-\lambda)^2}{|X^{T}|}d\sigma
 -r^{-n-1-\frac{\lambda^2}{4}}\int_{B_r(0)\cap X(M)}(H-\frac{\lambda}{2})^2d\mu.
\endaligned
\end{equation}
Integrating \eqref{eq:4-20-3} from $r_2$ to $r_1$ ($r_1>r_2$), one has
\begin{equation}\label{eq:4-20-4}
\aligned
&\ \ \ \ r_1^{-n-\frac{\lambda^2}{4}}{\rm Area}(B_{r_1}(0)\cap X(M))-
         r_2^{-n-\frac{\lambda^2}{4}}{\rm Area}(B_{r_2}(0)\cap X(M))\\
&=r_1^{-n-2-\frac{\lambda^2}{4}}\int_{B_{r_1}(0)\cap X(M)}(H-\lambda)^2d\mu
  -r_2^{-n-2-\frac{\lambda^2}{4}}\int_{B_{r_2}(0)\cap X(M)}(H-\lambda)^2d\mu\\
&\ \ \ \
  +(n+2+\frac{\lambda^2}{4})\int_{r_2}^{r_1}s^{-n-3-\frac{\lambda^2}{4}}(\int_{B_{s}(0)\cap X(M)}(H-\lambda)^2d\mu)ds\\
&\ \ \ \ -\int_{r_2}^{r_1}s^{-n-1-\frac{\lambda^2}{4}}(\int_{B_{s}(0)\cap X(M)}(H-\frac{\lambda}{2})^2d\mu)ds\\
&\leq \bigl(r_1^{-n-2-\frac{\lambda^2}{4}}
 +r_2^{-n-2-\frac{\lambda^2}{4}}\bigl)\int_{B_{r_1}(0)\cap X(M)}(H-\lambda)^2d\mu.
\endaligned
\end{equation}
Here we used
$$
\biggl(\int_{B_{r}(0)\cap X(M)}(H-\lambda)^2d\mu\biggl)^{'} =r\int_{\partial(B_r(0)\cap X(M))}\frac{(H-\lambda)^2}{|X^{T}|}d\sigma
$$
and ${\rm Area}(B_{r}(0)\cap X(M))$ is non-decreasing in $r$ from  \eqref{eq:4-20-5-1}.
Combining \eqref{eq:4-20-4} with  \eqref{eq:4-20-6}, we have
\begin{equation}\label{eq:4-20-7}
\aligned
&\ \ \ \
\frac{{\rm Area}(B_{r_1}(0)\cap X(M))}{r_1^{n+\frac{\lambda^2}{4}}}
-\frac{{\rm Area}(B_{r_2}(0)\cap X(M))}{r_2^{n+\frac{\lambda^2}{4}}}\\
&\leq (2n+\lambda^2)\bigl(\frac{1}{r_1^{n+2+\frac{\lambda^2}{4}}}
+ \frac{1}{r_2^{n+2+\frac{\lambda^2}{4}}}\bigl){\rm Area}(B_{r_1}(0)\cap X(M)).
\endaligned
\end{equation}
Putting $r_1=t+1$, $r_2=t>0$, we get
\begin{equation}\label{eq:4-20-8}
\aligned
 &\biggl(1-\frac{2(2n+\lambda^2)(t+1)^{n+\frac{\lambda^2}{4}}}
                                     {t^{n+2+\frac{\lambda^2}{4}}}\biggl){\rm Area}(B_{{t+1}}(0)\cap X(M)) \\
                                     & \leq {\rm Area}(B_{{t}}(0)\cap X(M)) (\frac{t+1}{t})^{n+\frac{\lambda^2}{4}}.
                                     \endaligned
\end{equation}
For $t$ sufficiently large, one has, from \eqref{eq:4-20-8},
\begin{equation}\label{eq:4-20-9}
\aligned
&\ \ \ \
{\rm Area}(B_{t+1}(0)\cap X(M))-{\rm Area}(B_{t}(0)\cap X(M))\\
&\leq {\rm Area}(B_{t}(0)\cap X(M))\biggl ((1+\frac{1}{t})^n-1+\frac{C(t+1)^{2n+\lambda^2}{4}}{t^{2n+2+\lambda^2}}\biggl),
\endaligned
\end{equation}
where ${C}$ is constant only depended on $n$, $\lambda$.
Therefore,  there exists some constant $C_1(n,\lambda)$ such that for all $t\geq C_1(n,\lambda)$,
\begin{equation}
\aligned
&{\rm Area}(B_{t+1}(0)\cap X(M))-{\rm Area}(B_{t}(0)\cap X(M))\\
&\leq c(n,\lambda)\frac{{\rm Area}(B_{t}(0)\cap X(M))}{t},
\endaligned
\end{equation}
\begin{equation}
{\rm Area}(B_{t+1}(0)\cap X(M))\leq 2{\rm Area}(B_{t}(0)\cap X(M)),
\end{equation}
where $c(n,\lambda)$ depends only on $n$ and $\lambda$. This completes the proof of Lemma \ref{lemma:4-20-1}.

\end{proof}

\vskip3mm
\noindent
The following Logarithmic Sobolev inequality for hypersurfaces in Euclidean space is due to Ecker \cite{[E]},
\begin{lemma}\label{lemma:4-20-2}
Let $X: M\rightarrow \mathbb{R}^{n+1}$ be an $n$-dimensional hypersurface with measure $d\mu$. Then the following inequality
\begin{equation}\label{eq:4-20-10}
\aligned
&\ \ \ \
\int_M f^2(\ln f^2)e^{-\frac{|X|^2}{2}}d\mu-\int_M f^2e^{-\frac{|X|^2}{2}}d\mu \ln (\int_M f^2e^{-\frac{|X|^2}{2}}d\mu)\\
&\leq 2\int_M|\nabla f|^2e^{-\frac{|X|^2}{2}}d\mu+\frac{1}{2}\int_M|H+\langle X,N\rangle|^2f^2e^{-\frac{|X|^2}{2}}d\mu\\
&\ \ \  +C_1(n)\int_Mf^2e^{-\frac{|X|^2}{2}}d\mu,
\endaligned
\end{equation}
\begin{equation}\label{eq:12-25-2}
\aligned
&\ \ \ \
\int_M f^2(\ln f^2)d\mu-\int_M f^2d\mu \ln (\int_M f^2d\mu)\\
&\leq 2\int_M|\nabla f|^2d\mu+\frac{1}{2}\int_M|H|^2f^2d\mu+C_2(n)\int_Mf^2d\mu
\endaligned
\end{equation}
hold for any nonnegative function $f$ for which all integrals are well-defined and finite, where $C_1(n)$ and $C_2(n)$ are positive constants depending on $n$.
\end{lemma}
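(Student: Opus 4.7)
The plan is to reduce both inequalities to Gross's classical Gaussian logarithmic Sobolev inequality on $\mathbb{R}^{n+1}$ via a tubular-neighborhood extension, with the mean-curvature terms arising from the first-order error in the normal direction. Throughout I treat $f$ as smooth, nonnegative and compactly supported; the general case follows by standard approximation.

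For \eqref{eq:4-20-10}, I would extend $f$ from $M$ to a tubular neighborhood $\{y + sN(y): y\in M,\,|s|<\delta\}$ by $\tilde f(y+sN(y)) = f(y)\phi_\epsilon(s)$, where $\phi_\epsilon$ is a smooth normal cutoff with $\int \phi_\epsilon(s)^2\,ds = 1$ and $\mathrm{supp}(\phi_\epsilon)\subset[-\epsilon,\epsilon]$. Applying the classical inequality
\begin{equation*}
\int_{\mathbb{R}^{n+1}} \tilde f^2 \ln \tilde f^2 \,e^{-\tfrac{|x|^2}{2}}dx - \left(\int \tilde f^2 e^{-\tfrac{|x|^2}{2}}dx\right)\ln\int \tilde f^2 e^{-\tfrac{|x|^2}{2}}dx \leq 2\int |\nabla \tilde f|^2 e^{-\tfrac{|x|^2}{2}}dx
\end{equation*}
and expanding in tubular coordinates, the area element picks up a factor $\det(I-sA(y)) = 1 - sH(y) + O(s^2)$, while the Gaussian density satisfies $e^{-|y+sN|^2/2} = e^{-|y|^2/2}(1 - s\langle y,N\rangle + O(s^2))$. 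The $s$-integration of the product $\det(I-sA)\,e^{-|y+sN|^2/2}$ against $\phi_\epsilon^2$ contributes, to leading order as $\epsilon\to 0$, a factor of $1$ plus an $\epsilon$-controlled perturbation. The gradient splits as $|\nabla \tilde f|^2 = |\nabla f|^2 \phi_\epsilon^2 + f^2 (\phi_\epsilon')^2$; the divergent contribution from $(\phi_\epsilon')^2$ can be integrated by parts in $s$, producing an algebraic combination of $H$ and $\langle X,N\rangle$ that, after completing the square, yields precisely $\tfrac{1}{2}|H+\langle X,N\rangle|^2 f^2$, with the residual dimensional remainder absorbed into $C_1(n)\int_M f^2 e^{-|X|^2/2}d\mu$.

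To obtain \eqref{eq:12-25-2}, I would substitute $g = f\, e^{|X|^2/4}$ into \eqref{eq:4-20-10}, so that $g^2 e^{-|X|^2/2} = f^2$ and the logarithmic entropy transforms by adding the explicit term $\tfrac{1}{2}\int f^2 |X|^2 d\mu$; the gradient expands as $|\nabla g|^2 e^{-|X|^2/2} = |\nabla f|^2 + \tfrac{1}{2}\langle \nabla f^2, X^T\rangle + \tfrac{1}{4}|X^T|^2 f^2$. Integrating the cross term by parts produces a $\tfrac{1}{2}Hf^2\langle X,N\rangle$ contribution via $\mathrm{div}_M(X^T)=n-H\langle X,N\rangle$, and the square is completed so that the combined $|H+\langle X,N\rangle|^2 + |X^T|^2 - |X|^2 $ collapses to $H^2$ after cancellation. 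The main obstacle is carefully tracking these cross-terms and confirming that all remaining error is dominated by $\int f^2 d\mu$ with an $n$-dependent constant $C_2(n)$; the concentration limit $\epsilon\to 0$ in the first step is the other delicate point, since the $(\phi_\epsilon')^2$ term is individually divergent and only the specific cancellation with the normal translation of the weight yields a finite limit.
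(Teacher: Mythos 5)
First, a point of comparison: the paper does not prove this lemma at all --- it is quoted verbatim from Ecker \cite{[E]} --- so any argument you give is necessarily a different route from ``the paper's proof.'' Your second step is fine: substituting $g=f\,e^{|X|^2/4}$ into \eqref{eq:4-20-10} does yield \eqref{eq:12-25-2}. Indeed $|\nabla g|^2e^{-|X|^2/2}=|\nabla f|^2+\tfrac12\langle\nabla f^2,X^T\rangle+\tfrac14|X^T|^2f^2$, integration by parts with $\operatorname{div}_M X^T=n+H\langle X,N\rangle$ (the paper's convention, cf.\ \eqref{eq:4-20-1}) produces $-\tfrac12\int f^2(n+H\langle X,N\rangle)$, and this cross term cancels exactly against the cross term of $\tfrac12(H+\langle X,N\rangle)^2$, while $\tfrac12|X^T|^2+\tfrac12\langle X,N\rangle^2=\tfrac12|X|^2$ cancels the extra entropy term on the left; one ends with $C_2(n)=C_1(n)-n$ up to the normalization constant of the Gaussian. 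So \eqref{eq:12-25-2} is a genuine corollary of \eqref{eq:4-20-10}.

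The gap is in your proof of \eqref{eq:4-20-10} itself: the concentration limit cannot work, and the cancellation you are counting on does not exist. If $\phi_\epsilon$ is $L^2$-normalized in $s$ and supported in $[-\epsilon,\epsilon]$, the one-dimensional Poincar\'e inequality forces $\int(\phi_\epsilon')^2\,ds\ge \pi^2/(4\epsilon^2)$, so the right-hand side of Gross's inequality applied to $\tilde f$ contains the manifestly positive term $2\int_M f^2 e^{-|X|^2/2}d\mu\cdot\int(\phi_\epsilon')^2ds\;(1+o(1))\gtrsim \epsilon^{-2}$. Integration by parts in $s$ cannot remove it: writing $W(s)=\det(I-sA)e^{-|y+sN|^2/2}$, one has $\int(\phi_\epsilon')^2W=-\int\phi_\epsilon\phi_\epsilon''W-\int\phi_\epsilon\phi_\epsilon'W'$, where the first term is again of size $\epsilon^{-2}$ and positive, and the second is only $O(1)$ (since $\int\phi_\epsilon\phi_\epsilon'\,ds=0$, the leading contribution of $W'$ pairs with $\int s\,\phi_\epsilon\phi_\epsilon'\,ds=-\tfrac12$, which is how a $\tfrac12(H+\langle X,N\rangle)^2$--type term can appear, but only at order one). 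Meanwhile the left-hand side diverges only like $\ln(1/\epsilon)$, coming from $\int\phi_\epsilon^2\ln\phi_\epsilon^2\,ds$. An $O(\epsilon^{-2})$ positive surplus on the right against an $O(\ln(1/\epsilon))$ surplus on the left means that as $\epsilon\to0$ the inequality degenerates to ``finite $\le+\infty$'' and yields nothing. This is not a technical obstacle but a structural one: the Gaussian logarithmic Sobolev inequality does not restrict to submanifolds by concentrating a test function in the normal directions (if it did, no $|H+\langle X,N\rangle|^2$ correction would be needed), and there is the further problem that for a general immersed, noncompact hypersurface the tubular extension $\tilde f$ is not even well defined. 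A correct proof must work intrinsically on $M$, as Ecker does; you should either cite \cite{[E]} or reproduce an argument of that type rather than attempt an ambient extension.
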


\begin{corollary}\label{corollary:4-27-1}
For  an $n$-dimensional $\lambda$-hypersurface $X: M\rightarrow \mathbb{R}^{n+1}$,  we have  the following inequality
\begin{equation}
\int_M f^2(\ln f)e^{-\frac{|X|^2}{2}}d\mu\leq \int_M|\nabla f|^2e^{-\frac{|X|^2}{2}}d\mu
+\frac{1}{2}C_1(n)+\frac{1}{4}\lambda^2
\end{equation}
 for any nonnegative function $f$ which satisfies
\begin{equation}
\int_M f^2e^{-\frac{|X|^2}{2}}d\mu=1.
\end{equation}
\end{corollary}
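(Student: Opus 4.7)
The plan is to derive this corollary as a direct specialization of the Logarithmic Sobolev inequality \eqref{eq:4-20-10} from Lemma \ref{lemma:4-20-2}, applied to the $\lambda$-hypersurface equation.

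First I would substitute the normalization hypothesis $\int_M f^2 e^{-|X|^2/2}d\mu = 1$ into \eqref{eq:4-20-10}. This immediately kills the term $\int_M f^2 e^{-|X|^2/2}d\mu \,\ln\bigl(\int_M f^2 e^{-|X|^2/2}d\mu\bigr)$ since $\ln 1 = 0$, and collapses the final term $C_1(n)\int_M f^2 e^{-|X|^2/2}d\mu$ into the constant $C_1(n)$. On the left-hand side I would use the identity $\ln f^2 = 2\ln f$ to rewrite $\int_M f^2(\ln f^2)e^{-|X|^2/2}d\mu = 2\int_M f^2(\ln f)e^{-|X|^2/2}d\mu$.

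Next I would invoke the defining equation \eqref{eq:19-3-13-1} of a $\lambda$-hypersurface, namely $H + \langle X, N\rangle = \lambda$, to replace $|H+\langle X,N\rangle|^2$ by the constant $\lambda^2$ in the mean-curvature term of \eqref{eq:4-20-10}. Combined with the normalization, this term becomes simply $\tfrac{1}{2}\lambda^2$.

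Putting these substitutions together yields
\begin{equation*}
2\int_M f^2(\ln f)e^{-\frac{|X|^2}{2}}d\mu \leq 2\int_M |\nabla f|^2 e^{-\frac{|X|^2}{2}}d\mu + \tfrac{1}{2}\lambda^2 + C_1(n),
\end{equation*}
and dividing by $2$ gives exactly the desired inequality. There is no genuine obstacle here; the only thing to be careful about is the bookkeeping of the factor $2$ coming from $\ln f^2 = 2 \ln f$ versus the factor $2$ in front of the Dirichlet energy in \eqref{eq:4-20-10}, which is precisely what produces the clean coefficient $1$ on $\int_M |\nabla f|^2 e^{-|X|^2/2}d\mu$ and the coefficients $\tfrac14\lambda^2$ and $\tfrac12 C_1(n)$ on the right-hand side.
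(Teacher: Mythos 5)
Your proposal is correct and is exactly the intended derivation: the paper states the corollary without a separate proof precisely because it is the immediate specialization of \eqref{eq:4-20-10} under the normalization $\int_M f^2e^{-|X|^2/2}d\mu=1$, together with $\ln f^2=2\ln f$ and $H+\langle X,N\rangle=\lambda$. Your bookkeeping of the factors of $2$ reproduces the stated coefficients $\tfrac12 C_1(n)$ and $\tfrac14\lambda^2$ correctly.
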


\begin{lemma}$($\cite{[LW2]}$)$\label{lemma:4-27-1}
Let $X: M\rightarrow \mathbb{R}^{n+1}$ be a complete properly immersed hypersurface.
For any $x_0\in M$, $r\leq 1$, if $|H|\leq\frac{C}{r}$ in $B_r(X(x_0))\cap X(M)$ for some constant $C>0$. Then
\begin{equation}
\text{Area}(B_{r}(X(x_0))\cap X(M))\geq \kappa r^n,
\end{equation}
where $\kappa=\omega_n e^{-C}$.
\end{lemma}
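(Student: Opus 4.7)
The plan is to derive a local monotonicity-type inequality for the area density $\theta(s)=\text{Area}(B_s(X(x_0))\cap X(M))/s^n$ on $(0,r]$, control the non-monotone term using the hypothesis $|H|\le C/r$, and then let $s\to 0^+$, where smoothness of the immersion at $x_0$ supplies the Euclidean density $\omega_n$.

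First I would set $Y=X-X(x_0)$, $\rho=|Y|$, and $V(s)=\text{Area}(B_s(X(x_0))\cap X(M))$. Since $\Delta_MX=\mathbf{H}$ gives $\tfrac12\Delta_M|Y|^2=n+\langle Y,\mathbf{H}\rangle$, the divergence theorem on $B_s(X(x_0))\cap X(M)$, using outward unit conormal $\nu=Y^T/|Y^T|$ on the boundary (so $\langle Y^T,\nu\rangle=|Y^T|$), produces
\begin{equation*}
nV(s)+\int_{B_s(X(x_0))\cap X(M)}\langle Y,\mathbf{H}\rangle\,d\mu=\int_{\partial(B_s(X(x_0))\cap X(M))}|Y^T|\,d\sigma.
\end{equation*}
The coarea formula, combined with $|\nabla_M\rho|=|Y^T|/s$ on the boundary, gives $sV'(s)=\int_{\partial(B_s(X(x_0))\cap X(M))}s^2/|Y^T|\,d\sigma$. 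Decomposing $s^2=|Y^T|^2+\langle Y,N\rangle^2$ on that boundary and combining the two identities yields the monotonicity identity
\begin{equation*}
sV'(s)-nV(s)=\int_{\partial(B_s(X(x_0))\cap X(M))}\frac{\langle Y,N\rangle^2}{|Y^T|}\,d\sigma+\int_{B_s(X(x_0))\cap X(M)}\langle Y,\mathbf{H}\rangle\,d\mu.
\end{equation*}

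Dividing by $s^{n+1}$ yields $\theta'(s)$ on the left. The boundary term on the right is non-negative, and since $|Y|\le s$ on $B_s(X(x_0))$ and $|\mathbf{H}|\le C/r$ in $B_r(X(x_0))\cap X(M)$ by hypothesis, the bulk term is bounded below by
\begin{equation*}
-\frac{1}{s^{n+1}}\cdot s\cdot\frac{C}{r}\cdot V(s)=-\frac{C}{r}\,\theta(s).
\end{equation*}
Therefore $\theta'(s)\ge-\tfrac{C}{r}\,\theta(s)$ on $(0,r]$, equivalently $(\log\theta)'(s)\ge-C/r$.

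Integrating from $s\in(0,r]$ up to $r$ gives $\log\theta(r)-\log\theta(s)\ge-C(r-s)/r\ge-C$. Properness together with smoothness of the immersion at $x_0$ forces $B_s(X(x_0))\cap X(M)$ to be a near-planar graph over the tangent plane at $X(x_0)$ for small $s$, hence $\lim_{s\to 0^+}\theta(s)=\omega_n$. Passing to the limit then gives $\theta(r)\ge\omega_n e^{-C}$, i.e.\ $\text{Area}(B_r(X(x_0))\cap X(M))\ge\omega_n e^{-C}r^n$. The main technical step is the monotonicity identity obtained by coupling the divergence theorem with the coarea formula; bounding the error term via $|H|\le C/r$ and the final limit as $s\to 0^+$ are then routine.
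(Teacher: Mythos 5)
Your proof is correct: it is the standard monotonicity-formula argument for hypersurfaces with locally bounded mean curvature, which is exactly how this lemma is established in the cited source [LW2] (the paper itself states it without proof, attributing it to Li--Wei). The identity $sV'(s)-nV(s)=\int_{\partial}\langle Y,N\rangle^2/|Y^T|\,d\sigma+\int\langle Y,\mathbf{H}\rangle\,d\mu$, the bound $\theta'\ge -(C/r)\theta$, and the density limit $\liminf_{s\to0^+}\theta(s)\ge\omega_n$ together give precisely $\kappa=\omega_ne^{-C}$.
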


\begin{lemma}\label{lemma:4-21-1}
If $X: M\rightarrow \mathbb{R}^{n+1}$ is an $n$-dimensional complete and non-compact proper $\lambda$-hypersurface, then it has infinite area.
\end{lemma}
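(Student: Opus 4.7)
The strategy is proof by contradiction: suppose $A_\infty:=\int_M d\mu<\infty$ and derive a contradiction by producing arbitrarily many disjoint pieces of $M$ of area bounded below. Several a priori bounds are immediate from the hypotheses: from \eqref{eq:12-25-1} in Lemma \ref{lemma:4-20-1} we obtain $\int_M H^2 d\mu\le(2n+\lambda^2)A_\infty<\infty$, and Lemma \ref{lemma:4-20-1} also yields the annular bound $A(t+1)-A(t)\le c(n,\lambda)A_\infty/t$. Since $X$ is proper and $M$ is complete, non-compact and connected, for every sufficiently large integer $k$ there is a point $x_k\in M$ with $|X(x_k)|=k$, obtained by applying the intermediate value theorem to $|X|$ along a path in $M$ that escapes every compact set.

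The engine of the contradiction is Lemma \ref{lemma:4-27-1}. The defining equation $H+\langle X,N\rangle=\lambda$ together with $|\langle X,N\rangle|\le|X|$ gives the pointwise bound $|H|\le|X|+|\lambda|$. Thus in the Euclidean ball $B_{1/k}(X(x_k))$ one has $|X|\le k+1/k$, hence $|H|\le(|\lambda|+2)k=(|\lambda|+2)/(1/k)$. Applying Lemma \ref{lemma:4-27-1} with $r=1/k$ and $C=|\lambda|+2$ yields
$$\mathrm{Area}(B_{1/k}(X(x_k))\cap X(M))\ge\kappa k^{-n},\qquad \kappa:=\omega_n e^{-|\lambda|-2}.$$
For $k\ge 3$ the balls $B_{1/k}(X(x_k))$ lie in the pairwise disjoint annuli $\{k-1/k\le|X|\le k+1/k\}$, so their preimages in $M$ are disjoint and $A_\infty\ge\sum_{k\ge 3}\kappa k^{-n}$. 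When $n=1$ the harmonic series diverges, yielding the contradiction at once.

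For $n\ge 2$ the series $\sum k^{-n}$ converges and the argument must be sharpened. The plan is to exhibit, within each annulus $\{k\le|X|\le k+1\}$, a family of $\Omega(k^{n-1})$ points of $M$ whose images have pairwise Euclidean distance $\ge 2/k$; each such point then contributes $\kappa k^{-n}$ via Lemma \ref{lemma:4-27-1}, producing annular area of order $k^{-1}$, and $\sum_k k^{-1}=\infty$ completes the contradiction. I expect this packing step to be the main obstacle: rigorously establishing the existence of so many separated points per annulus requires combining the co-area formula applied to $|X|$ on $M$, the $L^2$-bound $\int_M(H-\lambda)^2 d\mu\le(2n+\lambda^2)A_\infty$ (which controls how non-tangential the position vector can be to $M$ and forces $M$ to be asymptotically radial at infinity), and the connectedness of $M$ to guarantee genuine $(n-1)$-dimensional spread of the cross-section $\{|X|=s\}\cap M$ within each shell.
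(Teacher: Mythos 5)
Your proof is complete only for $n=1$ (and, with a careful count, would also work for $n=2$), but for general $n$ it has a genuine gap exactly where you suspect: the packing step. Connectedness plus properness only guarantees a one-dimensional crossing of each annulus $\{k\le |X|\le k+1\}$: taking a path from the inner to the outer boundary and using $\bigl||X(p)|-|X(q)|\bigr|\le |X(p)-X(q)|$, you can extract on the order of $k$ points with pairwise Euclidean distance $\ge 2/k$, not $k^{n-1}$ of them. Each contributes $\kappa k^{-n}$ by Lemma \ref{lemma:4-27-1}, so the annular area is only bounded below by a multiple of $k^{1-n}$, and $\sum_k k^{1-n}$ converges for $n\ge 3$. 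Nothing in the hypotheses forces the level sets $\{|X|=s\}\cap M$ to have $(n-1)$-dimensional spread: the $L^2$ bound on $H-\lambda$ controls $\langle X,N\rangle$ in an averaged sense but says nothing about the transverse extent of the cross-sections, so the claimed family of $\Omega(k^{n-1})$ separated points cannot be produced from the stated ingredients. Since this packing is the entire engine of your contradiction for $n\ge 2$, the argument does not close.

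The paper's proof accepts that the packing only yields the weak bound $A(k,k+1)\ge \kappa_1\,2^{2k-1-kn}$ (its dyadic analogue of your $k^{1-n}$ per annulus) and uses it for a different purpose: to show that if the total area were finite, there would exist annuli $\Omega(k_1,k_2)$ with area at most $\varepsilon$ satisfying the doubling-type inequality $A(k_1,k_2)\le 2^{4n}A(k_1+2,k_2-2)$. The actual contradiction then comes from Ecker's logarithmic Sobolev inequality (Corollary \ref{corollary:4-27-1}): one normalizes $f=e^{L+|X|^2/4}\psi(\rho)$ with a cutoff $\psi$ on the annulus so that $\int_M f^2 e^{-|X|^2/2}d\mu=1$; the doubling inequality bounds the error terms and forces $L$ to be bounded above, while $e^{2L}\ge 1/\varepsilon$ forces $L$ to be arbitrarily large. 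If you want to salvage your approach, you would need to import this (or an equivalent) analytic input; the purely geometric packing route is insufficient in dimensions $n\ge 3$.
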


\begin{proof}
Let
\begin{equation*}
\Omega(k_1,k_2)=\{x\in M: 2^{k_1-\frac{1}{2}}\leq \rho(x)\leq 2^{k_2-\frac{1}{2}}\},
\end{equation*}
\begin{equation*}
A(k_1,k_2)=\text{Area}(X(\Omega(k_1,k_2))),
\end{equation*}
where $\rho(x)=|X(x)|$. Since $X: M\rightarrow \mathbb{R}^{n+1}$ is a complete and non-compact proper immersion, $X(M)$ can not
be contained in a compact Euclidean ball. Then, for $k$ large enough, $\Omega(k, k+1)$ contains at least $2^{2k-1}$ disjoint balls
\begin{equation*}
B_r(x_i)=\{ x\in M: \rho_{x_i}(x)<2^{-\frac{1}{2}}r\}, \ \ x_i\in M,\ r=2^{-k}
\end{equation*}
where $\rho_{x_i}(x)=|X(x)-X(x_i)|$. Since, in $\Omega(k,k+1)$,
\begin{equation}
|H|\leq |H-\lambda|+|\lambda|=|\langle X,N\rangle|+|\lambda|\leq |X|+|\lambda|\leq 2^{k}\sqrt2+|\lambda|
\leq \frac{\sqrt2+|\lambda|}{r},\end{equation}
by using of Lemma \ref{lemma:4-27-1}, we get
\begin{equation}\label{eq:4-27-2}
A(k,k+1)\geq \kappa_1 2^{2k-1-kn},
\end{equation}
with $\kappa_1=\omega_n e^{-(\sqrt{2}+|\lambda|)2^{-\frac{1}{2}}}2^{-\frac{n}{2}}$.

{\bf Claim}: If ${\text Area}(X(M))<\infty$, then, for every $\varepsilon>0$, there exists a large constant $k_0>0$ such that,
\begin{equation}\label{eq:4-27-1}
A(k_1,k_2)\leq \varepsilon \ \ \ {\rm and}\ \ \ A(k_1,k_2)\leq 2^{4n}A(k_1+2,k_2-2), \ \ \ {\rm if }\ \ k_2>k_1>k_0.
\end{equation}

\noindent
In fact, we may choose $K>0$ sufficiently large such that  $k_1\approx \frac{K}{2}$, $k_2\approx\frac{3K}{2}$.
Assume  \eqref{eq:4-27-1} does not hold, that is,
\begin{equation*}
A(k_1,k_2)\geq 2^{4n}A(k_1+2,k_2-2).
\end{equation*}
If
\begin{equation*}
A(k_1+2,k_2-2)\leq 2^{4n}A(k_1+4,k_2-4),
\end{equation*}
then we complete the proof of the claim. Otherwise, we can repeat the procedure for  $j$ times, we have
\begin{equation*}
A(k_1,k_2)\geq 2^{4nj}A(k_1+2j,k_2-2j).
\end{equation*}
When $j\approx\frac{K}{4}$, we have from \eqref{eq:4-27-2}
\begin{equation*}
\text{Area}(X(M))\geq A(k_1,k_2)\geq 2^{nK}A(K,K+1)\geq \kappa_1 2^{2K-1}.
\end{equation*}
Thus, \eqref{eq:4-27-1}  must hold for some $k_2>k_1$
because $\text{Area}(M)<\infty$.
Hence for any $\varepsilon>0$, we can choose $k_1$ and $k_2\approx 3k_1$ such that \eqref{eq:4-27-1} holds.

\noindent
We  define a smooth cut-off function $\psi(t)$ by
\begin{equation}
 \psi(t)= {\begin{cases}
      \ 1, & \ \ 2^{k_1+\frac{3}{2}}\leq t\leq 2^{k_2-\frac{5}{2}},\\
      \ 0,& \ \ {\rm outside}\ [2^{k_1-\frac{1}{2}}, 2^{k_2-\frac{1}{2}}].\\
     \end{cases}}
     \ \ \ \ \ 0\leq \psi(t)\leq 1, \ \ \ \ |\psi^{'}(t)|\leq 1.
  \end{equation}
Moreover, $\psi(t)$ can be defined in such a way that
\begin{equation}
0\leq \psi^{\prime}(t)\leq\frac{c_1}{2^{k_1-\frac{1}{2}}}, \ \ \ t\in [2^{k_1-\frac{1}{2}}, 2^{k_1+\frac{3}{2}}],
\end{equation}
\begin{equation}
-\frac{c_2}{2^{k_2-\frac{1}{2}}}\leq \psi^{\prime}(t)\leq 0, \ \ \ t\in [2^{k_2-\frac{5}{2}}, 2^{k_2-\frac{1}{2}}],
\end{equation}
for some positive constants $c_1$ and $c_2$.

\noindent Letting
\begin{equation}
f(x)=e^{L+\frac{|X|^2}{4}}\psi(\rho(x)),
\end{equation}
we choose the constant $L$ satisfying
\begin{equation}
1=\int_M f^2e^{-\frac{|X|^2}{2}}d\mu=e^{2L}\int_{\Omega(k_1,k_2)}\psi^2(\rho(x))d\mu.
\end{equation}
We obtain from Corollary \ref{corollary:4-27-1}, $t\ln t\geq-\frac{1}{e}$ for $0\leq t\leq 1$, $|\nabla \rho|\leq 1$
and $\psi^{'}(\rho(x))\leq 0$ in $\Omega(k_1+2,k_2)$ that
\begin{equation}
\aligned
\frac{1}{2}C_1(n)+\frac{1}{4}\lambda^2&\geq\int_{\Omega(k_1,k_2)}e^{2L}\psi^2(L+\frac{|X|^2}{4}+\ln \psi)d\mu\\
              &\ \ \ -\int_{\Omega(k_1,k_2)}e^{2L}|\psi^{'}\nabla \rho+\psi\frac{X^T}{2}|^2d\mu\\
              &\geq\int_{\Omega(k_1,k_2)}e^{2L}\psi^2(L+\frac{|X|^2}{4}+\ln \psi)d\mu\\
              &\ \ -\int_{\Omega(k_1,k_2)}e^{2L}|\psi^{'}|^2d\mu-\frac{1}{4}\int_{\Omega(k_1,k_2)}e^{2L}\psi^2|X|^2d\mu\\
              &\ \ -\frac{1}{2}\int_{\Omega(k_1,k_2)}e^{2L}\psi^{'}\psi\frac{|X^T|^2}{|X|}d\mu\\
              &\geq L+\int_{\Omega(k_1,k_2)}e^{2L}\psi^2\ln \psi d\mu-\int_{\Omega(k_1,k_2)}e^{2L}|\psi^{'}|^2 d\mu\\
              &\ \ -\frac{1}{2}\int_{\Omega(k_1,k_1+2)}e^{2L}\psi^{'}\psi |X|d\mu\\
              &\geq L-(\frac{1}{2e}+1)e^{2L}A(k_1,k_2)-2c_1e^{2L}A(k_1,k_1+2).
\endaligned
\end{equation}
Therefore,  it follows from \eqref{eq:4-27-1} that
\begin{equation}\label{eq:4-27-3}
\aligned
\frac{1}{2}C_1(n)+\frac{1}{4}\lambda^2&\geq  L-(\frac{1}{2e}+1+2c_1)e^{2L}2^{4n}A(k_1+2,k_2-2)\\
              & \geq  L-(\frac{1}{2e}+1+2c_1)e^{2L}2^{4n}\int_{\Omega(k_1,k_2)}\psi^2(\rho(x))d\mu\\
              &= L-(\frac{1}{2e}+1+2c_1)2^{4n}.
\endaligned
\end{equation}
On the other hand, we have, from \eqref{eq:4-27-1} and definition of $f(x)$,
\begin{equation}
1\leq e^{2L}\varepsilon.
\end{equation}
Letting $\varepsilon>0$ sufficiently small, then $L$ can be arbitrary large, which  contradicts
 \eqref{eq:4-27-3}. Hence,  $M$ has infinite area.
\end{proof}

\begin{theorem}\label{theorem:4-21-1}
Let $X: M\rightarrow \mathbb{R}^{n+1}$ be an $n$-dimensional complete proper $\lambda$-hypersurface.
Then, for any $p\in M$, there exists a constant $C>0$ such that
$$
\text{Area}(B_{r}(X(x_0))\cap X(M))\geq Cr,
$$
for all $r>1$.
\end{theorem}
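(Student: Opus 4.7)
The plan is to argue by contradiction, adapting the cut-off / log-Sobolev scheme of Lemma \ref{lemma:4-21-1} (which in turn follows Li--Wei \cite{[LW2]} for self-shrinkers) to detect sublinear rather than merely finite area. As a preliminary reduction, the inclusion
\begin{equation*}
B_{r-|X(x_0)|}(0)\cap X(M)\subset B_r(X(x_0))\cap X(M),
\end{equation*}
valid for $r>|X(x_0)|$, shows it suffices to prove $A(r):={\rm Area}(B_r(0)\cap X(M))\ge Cr$ for all $r$ sufficiently large; the small-$r$ range is then controlled by $A(1)>0$ and properness. Suppose, towards contradiction, that $\liminf_{r\to\infty}A(r)/r=0$, and choose $r_k\to\infty$ with $A(r_k)/r_k\to 0$. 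Iterating the annular estimate $A(t+1)-A(t)\le cA(t)/t$ from Lemma \ref{lemma:4-20-1} together with its doubling counterpart yields, for every fixed $J$,
\begin{equation*}
A(r_k+J)-A(r_k)\le C_J\,\frac{A(r_k)}{r_k}\longrightarrow 0,
\end{equation*}
so the extrinsic area on every bounded-width annulus near $r_k$ tends to zero.

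I would then locate a dyadic annular region $\Omega(k_1,k_2)=\{x\in M:2^{k_1-1/2}\le\rho(x)\le 2^{k_2-1/2}\}$ (with $k_1<k_2$ large, near the scale of $r_k$) satisfying both a smallness bound $A(\Omega(k_1,k_2))\le\varepsilon$ and the self-similar bound $A(\Omega(k_1,k_2))\le 2^{4n}A(\Omega(k_1+2,k_2-2))$, in analogy with the claim \eqref{eq:4-27-1} from Lemma \ref{lemma:4-21-1}. Smallness follows from the vanishing of the annular area obtained above; the self-similar bound is produced by the iterative scheme inside the proof of Lemma \ref{lemma:4-21-1} --- failure at every level forces a geometric decay of $A(\Omega(k_1+2j,k_2-2j))$ that eventually contradicts the extrinsic small-ball lower bound of Lemma \ref{lemma:4-27-1}. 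With such an annulus fixed, take the smooth cut-off $\psi$ as in Lemma \ref{lemma:4-21-1}, supported on $[2^{k_1-1/2},2^{k_2-1/2}]$ and equal to $1$ on the inner part $[2^{k_1+3/2},2^{k_2-5/2}]$, and set
\begin{equation*}
f(x)=e^{L+|X|^2/4}\,\psi(\rho(x)),
\end{equation*}
choosing $L$ so that $\int_M f^2 e^{-|X|^2/2}d\mu=1$, i.e.\ $e^{2L}\int\psi^2 d\mu=1$.

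Substituting $f$ into the log-Sobolev inequality of Corollary \ref{corollary:4-27-1}, the principal terms $e^{2L}\psi^2|X|^2/4$ on the left-hand side cancel with $e^{2L}\psi^2|X^T|^2/4$ appearing in $|\nabla f|^2$ on the right, up to the nonnegative remainder $e^{2L}\psi^2(\lambda-H)^2/4$, which is discarded. Exactly as in the proof of Lemma \ref{lemma:4-21-1}, the self-similar property forces every remaining gradient term, together with the entropy contribution $\int e^{2L}\psi^2\ln\psi\,d\mu\ge -\frac{1}{2e}e^{2L}|\Omega(k_1,k_2)|$, to be bounded by a constant depending only on $n$ and $\lambda$, yielding $L\le C(n,\lambda)$. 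On the other hand, $e^{2L}\int\psi^2 d\mu=1$ together with $\int\psi^2 d\mu\le A(\Omega(k_1,k_2))\le\varepsilon$ gives $L\ge-\tfrac12\ln\varepsilon$, which can be made arbitrarily large. This is the required contradiction and produces the linear lower bound.

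The main obstacle lies in the derivation of an annulus $\Omega$ simultaneously small and self-similar in the sublinear setting: the original argument in Lemma \ref{lemma:4-21-1} used the finiteness of the total area directly, whereas here the iteration must be driven by the annular estimate of Lemma \ref{lemma:4-20-1}, and its termination has to be justified via the extrinsic small-ball bound of Lemma \ref{lemma:4-27-1}. Once this geometric step is secured, the remaining computations are essentially mechanical transcriptions of the ones carried out in the proof of Lemma \ref{lemma:4-21-1}.
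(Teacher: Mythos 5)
There is a genuine gap at exactly the point you flag as ``the main obstacle,'' and it cannot be secured in the form you propose. Your scheme needs a dyadic annulus $\Omega(k_1,k_2)$ with $k_2-k_1\approx K$ large that satisfies \emph{both} the self-similar bound and the smallness bound $A(k_1,k_2)\le\varepsilon$; the latter is what produces $L\ge-\tfrac12\ln\varepsilon$ from the normalization $e^{2L}\int\psi^2\,d\mu=1$. But smallness of such an annulus does not follow from sublinear growth: the annular estimate of Lemma \ref{lemma:4-20-1} only controls \emph{bounded-width} annuli (your own computation gives $A(r_k+J)-A(r_k)\le C_J A(r_k)/r_k\to0$ for fixed $J$), whereas $\Omega(k_1,k_2)$ has multiplicative width $\approx 2^{K}$, and the hypothesis $A(r)\le\varepsilon r$ only yields $A(k_1,k_2)\le\varepsilon\,2^{k_2-1/2}$, which is useless. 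Worse, Lemma \ref{lemma:4-21-1} guarantees the total area is infinite, so $A(r_k)\to\infty$ and no annulus of the required size can have area $\le\varepsilon$. In the proof of Lemma \ref{lemma:4-21-1} the smallness came precisely from the finiteness of the total area, which is the one hypothesis you no longer have; the self-similar half of the claim could perhaps be salvaged (since $\kappa_1 2^{2K-1}$ eventually beats $\varepsilon 2^{3K/2}$), but the smallness half cannot.

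The paper's proof takes a different route for exactly this reason. It assumes $A(r)\le\varepsilon r$ for some $r$, defines the set $D$ of integers $k$ for which $A(t)\le 2\varepsilon t$ holds for all integers $r\le t\le k$, and runs an induction on $k$. The test function is a cut-off $u$ on the \emph{unit-width} annulus $B_{t+2}\setminus B_{t-1}$, inserted into the \emph{unweighted} logarithmic Sobolev inequality \eqref{eq:12-25-2} (not the weighted Corollary \ref{corollary:4-27-1}); under the inductive hypothesis, Lemma \ref{lemma:4-20-1} bounds the width-$3$ annular area by $2C_2(n,\lambda)\varepsilon$, so the logarithmic term contributes a large factor $\ln(2C_2\varepsilon)^{-1}$. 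Summing the resulting inequality from $t=r$ to $t=k$ telescopes, and together with the $H^2$ bound \eqref{eq:12-25-1} gives $A(k+1)\le 2\varepsilon r$, closing the induction. This shows $A(k)\le 2\varepsilon r$ for all integers $k\ge r$, i.e.\ finite total area, which contradicts Lemma \ref{lemma:4-21-1}. So Lemma \ref{lemma:4-21-1} is used as a black box at the end, rather than having its internal dyadic-annulus mechanism rerun; if you want to complete your argument you should switch to this unit-annulus/unweighted log-Sobolev induction rather than trying to reproduce the dyadic construction in the sublinear setting.
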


\begin{proof}
We can choose $r_0>0$ such that ${\rm Area}(B_{r}(0)\cap X(M))>0$ for $r\geq r_0$. It is sufficient to prove there exists a constant $C>0$ such that
\begin{equation}\label{eq:4-21-1}
{\rm Area}(B_{r}(0)\cap X(M))\geq Cr
\end{equation}
holds for all $r\geq r_0$. In fact, if \eqref{eq:4-21-1} holds, then for any $x_0\in M$ and $r>|X(x_0)|$,
\begin{equation}
B_r(X(x_0))\supset B_{r-|X(x_0)|}(0),
\end{equation}
and
\begin{equation}\label{eq:4-21-2}
{\rm Area}(B_{r}(X(x_0))\cap X(M)) \geq {\rm Area}(B_{r-|X(x_0)|}(0)\cap X(M))\geq \frac{C}{2}r,
\end{equation}
for $r\geq 2|X(x_0)|$.

\noindent
We next prove \eqref{eq:4-21-1} by contradiction. Assume for any $\varepsilon>0$, there exists $r\geq r_0$ such that
\begin{equation}\label{eq:4-21-3}
{\rm Area}(B_{r}(0)\cap X(M))\leq \varepsilon r.
\end{equation}
Without loss of generality, we assume $r\in \mathbb{N}$ and consider a set:
\begin{equation*}
D:=\{k\in \mathbb{N}: {\rm Area}(B_{t}(0)\cap X(M))\leq 2\varepsilon t {\rm \ for\ any \  integer}\  t  \ {\rm satisfying } \ r\leq t\leq k\}.
\end{equation*}
Next, we will show that  $k\in D$  for any integer $k$ satisfying $k\geq r$.
For $t\geq r_0$, we define a function $u$ by

\begin{equation}
 u(x)= {\begin{cases}
      \ t+2-\rho(x), & \ \ {\rm in}  \ \  B_{t+2}(0)\cap X(M)\setminus B_{t+1}(0)\cap X(M),\\
      \ 1,& \ \ {\rm in} \ \ B_{t+1}(0)\cap X(M)\setminus B_{t}(0)\cap X(M),\\
      \ \rho(x)-(t-1), & \ \ {\rm in}  \ \  B_{t}(0)\cap X(M)\setminus B_{t-1}(0)\cap X(M),\\
      \ 0, & \ \ {\rm otherwise}.\\
     \end{cases}}
  \end{equation}
Using Lemma \ref{lemma:4-20-2}, $|\nabla \rho|\leq 1$ and $t\ln t\geq -\frac{1}{e}$ for $0\leq t\leq 1$, we have
\begin{equation}\label{eq:4-21-5}
\aligned
&\ \
-\dfrac{1}{2}\int_M u^2d\mu\ln \bigl\{\bigl({\rm Area}(B_{t+2}(0)\cap X(M))-{\rm Area}(B_{t-1}(0)\cap X(M))\bigl)\bigl\}\\
&\leq C_0 \biggl({\rm Area}(B_{t+2}(0)\cap X(M))-{\rm Area}(B_{t-1}(0)\cap X(M))\biggl)\\
&\ \ +\dfrac{1}{4}\biggl(\int_{B_{t+2}(0)\cap X(M)}H^2d\mu-\int_{B_{t-1}(0)\cap X(M)}H^2d\mu\biggl),
\endaligned
\end{equation}
where $C_0=1+\frac{1}{2e}+\frac{1}{2}C_2(n)$, $C_2(n)$ is the constant of Lemma \ref{lemma:4-20-2}.

\noindent For all $t\geq C_1(n,\lambda)+1$, we have from Lemma \ref{lemma:4-20-1}
\begin{equation}\label{eq:4-21-4}
\aligned
&\ \ {\rm Area}(B_{t+2}(0)\cap X(M))-{\rm Area}(B_{t-1}(0)\cap X(M))\\
&\leq c(n,\lambda)\biggl(\frac{{\rm Area}(B_{t+1}(0)\cap X(M))}{t+1}\\
&\ \ \ +\frac{{\rm Area}(B_{t}(0)\cap X(M))}{t}+\frac{{\rm Area}(B_{t-1}(0)\cap X(M))}{t-1}\biggl)\\
&\leq c(n,\lambda)\biggl(\frac{2}{t+1}+\frac{1}{t}+\frac{1}{t}(1+\frac{1}{C_1(n,\lambda)})\biggl){\rm Area}(B_{t}(0)\cap X(M))\\
&\leq C_2(n,\lambda)\frac{{\rm Area}(B_{t}(0)\cap X(M))}{t},
\endaligned
\end{equation}
where $C_2(n,\lambda)$ is constant depended only on $n$ and $\lambda$. Note that we can assume $r\geq C_1(n,\lambda)+1$
for the $r$ satisfying \eqref{eq:4-21-3}. In fact, if for any given $\varepsilon>0$, all the $r$ which satisfies \eqref{eq:4-21-3}
is bounded above by $C_1(n,\lambda)+1$, then ${\rm Area}(B_{r}(0)\cap X(M))\geq C r$ holds for
any $r>C_1(n,\lambda)+1$. Thus, we know that  $M$ has at least linear area growth.
Hence, for any $k\in D$ and any $t$ satisfying $r\leq t\leq k$,  we have
\begin{equation}
{\rm Area}(B_{t+2}(0)\cap X(M))-{\rm Area}(B_{t-1}(0)\cap X(M))\leq 2C_2(n,\lambda)\varepsilon.
\end{equation}
Since
\begin{equation}
\int_M u^2d\mu\geq {\rm Area}(B_{t+1}(0)\cap X(M))-{\rm Area}(B_{t}(0)\cap X(M)),
\end{equation}
holds, if we choose $\varepsilon$ such that $2C_2(n,\lambda)\varepsilon<1$,
from \eqref{eq:4-21-5}, we obtain
\begin{equation}
\aligned
&\ \
({\rm Area}(B_{t+1}(0)\cap X(M))-{\rm Area}(B_{t}(0)\cap X(M)))\ln (2C_2(n,\lambda)\varepsilon)^{-1}\\
&\leq 2C_0\biggl({\rm Area}(B_{t+2}(0)\cap X(M))-{\rm Area}(B_{t-1}(0)\cap X(M))\biggl)\\
&\ \ +\dfrac{1}{2}\biggl(\int_{B_{t+2}(0)\cap X(M)}H^2d\mu-\int_{B_{t-1}(0)\cap X(M)}H^2d\mu\biggl).
\endaligned
\end{equation}
Iterating from $t=r$ to $t=k$ and taking summation on $t$, we infer, from Lemma \ref{lemma:4-20-1} and the equation \eqref{eq:12-25-1} that
\begin{equation}
\aligned
&\ \
({\rm Area}(B_{k+1}(0)\cap X(M))-{\rm Area}(B_{r}(0)\cap X(M)))\ln (2C_2(n,\lambda)\varepsilon)^{-1}\\
&\leq 6C_0{\rm Area}(B_{k+2}(0)\cap X(M))+\dfrac{3}{2}\int_{B_{k+2}(0)\cap X(M)}H^2d\mu\\
&\leq\biggl[6C_0+\dfrac{3}{2}(2n+\lambda^2)\biggl]{\rm Area}(B_{k+2}(0)\cap X(M))\\
&\leq2\biggl[6C_0+\dfrac{3}{2}(2n+\lambda^2)\biggl]{\rm Area}(B_{k+1}(0)\cap X(M)).
\endaligned
\end{equation}
Hence, we get
\begin{equation}\label{eq:4-21-6}
\aligned
&{\rm Area}(B_{k+1}(0)\cap X(M))\\
&\leq \frac{\ln (2C_2(n,\lambda)\varepsilon)^{-1}}
{\ln (2C_2(n,\lambda)\varepsilon)^{-1}-12C_0-3(2n+\lambda^2)}{\rm Area}(B_{r}(0)\cap X(M))\\
&\leq \frac{\ln (2C_2(n,\lambda)\varepsilon)^{-1}}
{\ln (2C_2(n,\lambda)\varepsilon)^{-1}-12C_0-3(2n+\lambda^2)}\varepsilon r.
\endaligned
\end{equation}
We can choose $\varepsilon$ small enough such that
\begin{equation}\label{eq:4-21-8}
\frac{\ln (2C_2(n,\lambda)\varepsilon)^{-1}}
{\ln (2C_2(n,\lambda)\varepsilon)^{-1}-12C_0-3(2n+\lambda^2)}\leq 2.
\end{equation}
Therefore,  it follows from \eqref{eq:4-21-6} that
\begin{equation}\label{eq:4-21-7}
{\rm Area}(B_{k+1}(0)\cap X(M))\leq 2\varepsilon r,
\end{equation}
for any $k\in D$. Since $ k+1\geq r$,  we have,  from \eqref{eq:4-21-7} and the definition of $D$, that $k+1\in D$.
Thus,  by induction, we know that $D$ contains all  of  integers $k\geq r$ and
\begin{equation}
{\rm Area}(B_{k}(0)\cap X(M))\leq 2\varepsilon r,
\end{equation}
for any integer $k\geq r$. This implies that $M$ has finite volume, which contradicts with Lemma \ref{lemma:4-21-1}.
Hence, there exist  constants $C$ and $r_0$ such that ${\rm Area}(B_{r}(0)\cap X(M))\geq C r$ for $r>r_0$.
It  completes the proof of Theorem \ref{theorem:4-21-1}.

\end{proof}

\begin{remark}
The estimate in our theorem  is the best possible because the cylinders $S^{n-1}(r_0)\times \mathbb{R}$
satisfy the equality.
\end{remark}

\vskip 2mm
\noindent
{\bf Acknowledgement}. The authors would like to thank the referees for careful reading of the paper and for the valuable suggestions and comments which make this paper better and more readable.

\vskip 5mm

\end {document}